\renewcommand\section{\@startsection {section}{1}{\z@}%
{-3.5ex \@plus -1ex \@minus -.2ex}%
{2.3ex \@plus.2ex}%
{\normalfont\centering}}
\renewcommand\subsection{\@startsection {subsection}{1}{\z@}%
{-3.5ex \@plus -1ex \@minus -.2ex}%
{0.8pt plus 2pt minus 2pt}%
{\normalfont}}
\renewcommand\subsubsection{\@startsection {subsubsection}{1}{\z@}%
{-3.5ex \@plus -1ex \@minus -.2ex}%
{0.8pt plus 2pt minus 2pt}%
{\normalfont}}
\newtheorem{lemma}{Lemma}%[chapter]
\newtheorem{theorem}[lemma]{Theorem}
\newtheorem{corollary}[lemma]{Corollary}
\newtheorem{proposition}[lemma]{Proposition}
\newtheorem{defnition}[lemma]{Definition} %leave this as is, soon this will be redefined as a new environment
\newenvironment{definition}{\begin{defnition} \em}{\end{defnition}}
\theoremstyle{remark}
\newcommand{\R}{\operatorname{Re}}
\newcommand{\I}{\operatorname{Im}}
\renewcommand{\liminf}{ \underline{\lim} \,}
\renewcommand{\limsup}{\overline{\lim} \,}
\begin{document}

\begin{center}
ON HYPERBOLIC POLYNOMIALS WITH FOUR-TERM RECURRENCE AND LINEAR COEFFICIENTS \\
\bigskip
RICHARD ADAMS
\end{center}

ABSTRACT. For any real numbers $a,\ b$, and $c$, we form the sequence of polynomials $\{P_n(z)\}_{n=0}^\infty$ satisfying the four-term recurrence
\[
P_n(z)+azP_{n-1}(z)+bP_{n-2}(z)+czP_{n-3}(z)=0,\ n\in\mathbb{N},
\]
with the initial conditions $P_0(z)=1$ and $P_{-n}(z)=0$. We find necessary and sufficient conditions on $a,\ b$, and $c$ under which the zeros of $P_n(z)$ are real for all $n$, and provide an explicit real interval on which $\displaystyle\bigcup_{n=0}^\infty\mathcal{Z}(P_n)$ is dense, where $\mathcal{Z}(P_n)$ is the set of zeros of $P_n(z)$.

%%%%%%%%%%%%%%%%%%%%%%%%%%%%%%%%%%%%%%%%%%%%%%%%%%%%%%%%%%%%%%%

\begin{center}
\section{INTRODUCTION}
\end{center}
\pagenumbering{arabic}%WE DON'T WANT ROMAN NUMERALS, SO WE SWITCH TO ARABIC.
 Recursively-defined sequences of polynomials have been studied extensively since the 18th century. Of particular interest are orthogonal polynomials, such as Hermite polynomials, Laguerre polynomials, and Jacobi polynomials, which have numerous applications in differential equations, mathematical and numerical analysis, and approximation theory (see \cite{Orthogonal,Orthogonal1}). Common orthogonal polynomials such as the Chebychev polynomials can be defined by a three-term recurrence relation, which in modern times has been generalized significantly (see \cite{Ch,Forgacs,He,Si,Tran1,Tran}). Not much is known, however, about four-term recurrences. For recent work on the four-term reccurence, see \cite{Eg,Goh,Zumba}. Consider the sequence of polynomials $\{P_n(z)\}_{n=0}^\infty$ satisfying the four-term recurrence relation 
\[
P_n(z)+A(z)P_{n-1}(z)+B(z)P_{n-2}(z)+C(z)P_{n-3}(z)=0,
\]
where $P_0(z)=1$, $P_{-n}(z)=0$, and $A(z),\ B(z)$, and $C(z)$ are some linear complex-valued polynomials with real coefficients. With certain initial conditions, one may wish to find where the zeros of $P_n(z)$ lie on the complex plane for each $n\in\mathbb{N}$. Let $\mathcal{Z}(P_n)=\{z\in\mathbb{C}|P_n(z)=0\}$. We say that a polynomial is hyperbolic if all of its zeros are real, and we are interested in finding necessary and sufficient conditions under which $P_n(z)$ is hyperbolic for all $n\in\mathbb{N}$. In \cite{Zumba} the authors characterized the case where $A(z)=a$, $B(z)=b$, and $C(z)=z$. In this paper, we characterize the case where $A(z)=az$,  $B(z)=b$, and $C(z)=cz$ for some nonzero real numbers $a,\ b$, and $c$. Next we present the main result of this paper.
\begin{theorem}\label{main1}
The zeros of the sequence $\{P_n(z)\}_{n=0}^\infty$ satisfying the recurrence relation 
\[
P_n(z)+azP_{n-1}(z)+bP_{n-2}(z)+czP_{n-3}(z)=0,\ a,b,c\in\mathbb{R}\setminus\{0\},
\]
where $P_0(z)=1$ and $P_{-n}(z)=0$,
are real if and only if $b>0$ and $\displaystyle\frac{c}{ab}:=\alpha\leq\frac{1}{9}$, in which case they lie on the interval $(-\lambda,\lambda)$, where 
\[
\lambda:=\frac{4}{\left(\frac{3\alpha+1+\sqrt{9\alpha^2-10\alpha+1}}{-5\alpha+1+\sqrt{9\alpha^2-10\alpha+1}}\right)^{\frac{3}{2}}\left(-5\alpha+1+\sqrt{9\alpha^2-10\alpha+1}\right)}.
\]
Furthermore, the set of zeros of $P_n(z)$ is dense on $(-\lambda,\lambda)$ as $n\to\infty$.
\end{theorem}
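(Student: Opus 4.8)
The plan is to pass to the generating function, read off a closed form for $P_n$ in terms of the roots of a characteristic cubic, and then study how those roots move as $z$ runs over the real axis. Multiplying the recurrence by $t^n$ and summing (using $P_0=1$, $P_{-1}=P_{-2}=0$) gives
\[
F(z,t):=\sum_{n=0}^\infty P_n(z)\,t^n=\frac{1}{1+azt+bt^2+czt^3}.
\]
Writing $\chi(r):=r^3+azr^2+br+cz$ for the characteristic cubic with roots $r_1(z),r_2(z),r_3(z)$, Vieta's relations give $1+azt+bt^2+czt^3=\prod_i(1-r_it)$, so $P_n=h_n(r_1,r_2,r_3)$ is the complete homogeneous symmetric polynomial, with the closed form
\[
P_n(z)=\sum_{i=1}^3\frac{r_i(z)^{\,n+2}}{\prod_{j\ne i}\bigl(r_i(z)-r_j(z)\bigr)} .
\]
Since $\deg P_n=n$ with leading coefficient $(-a)^n\neq0$, to prove hyperbolicity it suffices to exhibit $n$ real sign changes of $P_n$. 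The guiding principle (Beraha--Kahane--Weiss) is that the real limit points of $\bigcup_n\mathcal{Z}(P_n)$ are exactly the real $z$ at which the two largest-modulus roots of $\chi$ coincide in modulus; for real $z$ this is precisely the presence of a \emph{dominant complex-conjugate pair}.

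Next I would reduce to a single parameter. Setting $\widetilde P_n(z)=b^{-n/2}P_n\!\bigl(\sqrt{b}\,z/a\bigr)$ yields the same recurrence with $a=b=1$ and $c=\alpha$, so the problem depends only on $\alpha=c/(ab)$ and the zeros merely rescale by $\sqrt{b}/|a|$; this normalization forces $b>0$, and $b>0$ is in any case necessary since $P_2=a^2z^2-b$ has no real zero when $b<0$. Henceforth $a=b=1$, so $\chi(r)=r^3+zr^2+r+\alpha z$, i.e. a root $r$ corresponds to
\[
z=z(r):=-\frac{r(r^2+1)}{r^2+\alpha}.
\]

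I would then locate the endpoints and extract the threshold $\alpha\le\tfrac19$. A conjugate pair collapses onto the real axis exactly at a double root of $\chi$, i.e. at a critical point $z'(r)=0$; one computes $z'(r)=0\iff r^4+(3\alpha-1)r^2+\alpha=0$, so $r^2=\tfrac12\bigl(1-3\alpha\pm\sqrt{9\alpha^2-10\alpha+1}\bigr)$. The root with the $+$ sign, inserted into $z(r)$, simplifies to the stated value $\pm\lambda$ (in the normalized variable; one multiplies by $\sqrt b/|a|$ to return to general $a,b$). This $\lambda$ is real and positive precisely when $9\alpha^2-10\alpha+1\ge0$ together with the appropriate sign of the critical value, and these two conditions reduce exactly to $\alpha\le\tfrac19$. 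For $\alpha>\tfrac19$ the critical points, and with them the equal-modulus locus, leave the real axis; computing that locus via the principle of Step~1 produces a non-real limit point of $\bigcup_n\mathcal{Z}(P_n)$, hence non-real zeros for infinitely many $n$, which gives the necessity of $\alpha\le\tfrac19$.

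The crux is the exact hyperbolicity for \emph{every} $n$, together with density. For $z\in(-\lambda,\lambda)$ one shows $\chi$ has a dominant conjugate pair $\rho e^{\pm i\theta}$ (with $\rho>|s|$, $s$ the real root) and that, as $z$ runs from $-\lambda$ to $\lambda$, the argument $\theta=\theta(z)$ runs \emph{monotonically} from $0$ to $\pi$, the endpoints being the real double roots found above. The closed form of Step~1 then splits as
\[
P_n(z)=\rho(z)^{\,n}\,g_n\bigl(\theta(z)\bigr)+\frac{s^{\,n+2}}{\bigl(s-\rho e^{i\theta}\bigr)\bigl(s-\rho e^{-i\theta}\bigr)},
\]
where $g_n$ is a bounded-amplitude oscillation whose phase advances like $n\theta$, while the remainder is $O(|s|^{n})=o(\rho^{n})$. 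The aim is to conclude that $P_n$ has at least $n$ sign changes on $(-\lambda,\lambda)$; by the degree count this forces all $n$ zeros to be real and to lie in $(-\lambda,\lambda)$, and their equidistribution in the phase $\theta$ then yields density as $n\to\infty$. I expect \emph{this} to be the main obstacle, precisely because the estimate must be valid for every $n$ rather than only asymptotically: one must control the subdominant real-root term so it can never absorb a sign change and pin the count to exactly $n$. The plan is to make this rigorous either by a winding-number (argument-principle) computation for $\chi$ as $z$ traverses $[-\lambda,\lambda]$, or by establishing strict monotonicity of $\theta(z)$ together with a uniform lower bound on the oscillatory amplitude relative to the remainder; the density statement then follows simultaneously, or alternatively from the Beraha--Kahane--Weiss theorem applied to $F(z,t)$, whose real equal-modulus locus is exactly $[-\lambda,\lambda]$.
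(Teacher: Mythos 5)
Your overall route is the same as the paper's: pass to the generating function, parametrize the dominant complex-conjugate pair of roots by its argument $\theta$, show the correspondence between $z$ and $\theta$ is monotone, count sign changes of an oscillatory function of $\theta$ to get $n$ real zeros, and use a Beraha--Kahane--Weiss/Sokal-type theorem for necessity. Two of your shortcuts are correct and genuinely cleaner than the paper: the observation that $P_2(z)=a^2z^2-b$ already has non-real zeros when $b<0$ disposes of that case in one line (the paper instead proves the stronger statement that $P_n$ is non-hyperbolic for all large $n$, via a reciprocal-polynomial argument and Sokal's theorem), and your computation $z'(r)=0\iff r^4+(3\alpha-1)r^2+\alpha=0$ does recover the endpoint (at $\alpha=\tfrac19$ it gives $r^2=\tfrac13$ and $z=\mp\sqrt{3}$, matching $\lambda$), which is a more direct derivation of $\lambda$ than the paper offers.

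However, the two load-bearing steps are left as plans, and in one place the plan as stated would fail. First, the uniform-in-$n$ zero count: writing $P_n(z(\theta))=0$ as $g_n(\theta)=0$ with $g_n(\theta)=\frac{(\zeta-\cos\theta)\sin(n+1)\theta}{\sin\theta}-\cos(n+1)\theta+\zeta^{-(n+1)}$, the naive alternation argument at the lattice points $\theta=k\pi/(n+1)$ produces only $n-2$ zeros, because for $0<\alpha\leq\tfrac19$ the function $\zeta$ (hence $g_n$) has a vertical asymptote at $\theta=\pi/2$; your hoped-for ``uniform lower bound on the oscillatory amplitude relative to the remainder'' cannot hold there, since the amplitude factor $(\zeta-\cos\theta)/\sin\theta$ is unbounded near $\theta=\pi/2$ and the recovery of the missing zeros is parity-dependent. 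The paper needs three separate lemmas splitting $n$ modulo $4$ (one-sided limits $\pm\infty$ of $g_n$ at $\pi/2$ for $n$ even, a finite signed limit for $n$ odd), plus the observation that $P_n(0)=0$ for odd $n$ because the generating function at $z=0$ degenerates to $1/(1+t^2)$ --- that last real zero is invisible to your sign-change scheme. The monotonicity of $z(\theta)$ and the dominance $|\zeta|>1$ are likewise nontrivial (each gets its own section or lemma, the former via a logarithmic-derivative computation reducing $dz/d\theta$ to $z|h|^2/\operatorname{Im}h$, the latter via an intermediate-value argument on an auxiliary quadratic), so they cannot simply be assumed. Second, for necessity with $\tfrac19<\alpha\leq1$, asserting that ``the equal-modulus locus leaves the real axis'' is not enough: Sokal's theorem requires exhibiting a specific non-real $z^*$ at which two indices are dominant, i.e. $|t_1^*|=|t_2^*|\leq|t_3^*|$ with distinct nonzero roots, and verifying the no-degenerate-dominance hypothesis. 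The paper does this by taking $\theta^*$ near $0$ where $\Delta(\theta^*)<0$ forces $\tau^*,\zeta^*\notin\mathbb{R}$, deducing $z^*\notin\mathbb{R}$ by a conjugate-pair contradiction, and killing degeneracy by a finiteness argument combined with the implicit function theorem (with a separate explicit case $z=i$ for $\alpha>1$). These verifications constitute most of the actual proof; as it stands your proposal identifies the right skeleton but proves neither of the two hard steps.
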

This paper is organized as follows. In Section 2, we restate Theorem \ref{main1} using its generating function and define a few functions which will be used to prove the sufficiency condition of Theorem \ref{main1}. In Section 3, we prove the monotonicity of a special function defined in Section 2, which will then be used to make heuristic arguments in Section 4. Finally, we prove the necessary condition in Section 5, and mention an open problem in Section 6.

%%%%%%%%%%%%%%%%%%%%%%%%%%%%%%%%%%%%%%%%%%%%%%%%%%%%%%%%%%%%%%%%

\section{THE GENERATING FUNCTION AND SOME DEFINITIONS}

We begin this section by restating Theorem \ref{main1} using the generating function for $\{P_n(z)\}_{n=0}^\infty$.
\begin{theorem}\label{main2}
The zeros of the sequence $\{P_n(z)\}_{n=0}^\infty$ generated by 
\[
\sum_{n=0}^\infty P_n(z)t^n=\frac{1}{1+azt+bt^2+czt^3},\quad a,b,c\in\mathbb{R}\setminus\{0\},
\]
are real if and only if $b>0$ and  $\displaystyle\frac{c}{ab}:=\alpha\leq\frac{1}{9}$, in which case they lie on the interval $(-\lambda,\lambda)$
where 
\[
\lambda:=\frac{4}{\left(\frac{3\alpha+1+\sqrt{9\alpha^2-10\alpha+1}}{-5\alpha+1+\sqrt{9\alpha^2-10\alpha+1}}\right)^{\frac{3}{2}}\left(-5\alpha+1+\sqrt{9\alpha^2-10\alpha+1}\right)}.
\]
Furthermore, the set of zeros of $P_n(z)$ is dense on $(-\lambda,\lambda)$ as $n\to\infty$.
\end{theorem}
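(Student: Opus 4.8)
The plan is to first collapse the three parameters into one. Because hyperbolicity is preserved under the real substitution $t\mapsto t/\sqrt{b}$ (which is exactly where the hypothesis $b>0$ enters) together with a linear rescaling of $z$, the generating function reduces to $\sum_n \tilde P_n(w)s^n=(1+ws+s^2+\alpha w s^3)^{-1}$, so that the whole problem depends only on $\alpha=c/(ab)$; the zeros of $P_n$ are a fixed real multiple of those of $\tilde P_n$ (the sign of $a$ only reverses orientation and does not affect a symmetric interval), so it suffices to analyze this one-parameter family. Writing the denominator, a cubic in $t$, as $cz\prod_{i=1}^3\bigl(t-t_i(z)\bigr)$ and expanding in partial fractions gives the exact representation $P_n(z)=\sum_{i=1}^3 c_i(z)\,t_i(z)^{-(n+1)}$, so the large-$n$ behavior, and by the theorem of Beraha, Kahane and Weiss the limiting zero set, is governed by the two roots $t_i(z)$ of smallest modulus and by the locus $\Gamma$ on which those two roots are equal in modulus.

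A key structural observation is that the denominator is affine in $z$, so solving $1+ws+s^2+\alpha w s^3=0$ for $w$ gives a single-valued map $w=\phi(s)=-\dfrac{1+s^2}{s(1+\alpha s^2)}$ whose branch points (the zeros of $\phi'$) are precisely the values of $w$ at which two roots collide into a real double root. For sufficiency I would show that when $\alpha\le 1/9$ the dominant pair is, for every real $w$ in a symmetric interval, a complex-conjugate pair $s_{1,2}=re^{\pm i\theta}$ dominating the remaining real root; feeding this into the exact representation makes $\tilde P_n(w)$ a real multiple of $r^{-(n+1)}\cos\bigl((n+1)\theta(w)+\psi(w)\bigr)$ plus a strictly smaller term from the subdominant real root. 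Computing $\phi'(s)=0$ reduces, after the substitution $u=s^2$, to $\alpha u^2-(1-3\alpha)u+1=0$, whose discriminant is exactly $9\alpha^2-10\alpha+1$; this is nonnegative precisely when $\alpha\le 1/9$ or $\alpha\ge 1$, and the branch $\alpha\ge 1$ is discarded because there both roots $u$ are negative and give no real branch point. Evaluating $\phi$ at the admissible (positive) critical point yields, after simplification, the stated endpoint $\lambda$. The monotonicity of the angle $\theta(w)$ proved in Section 3 is then the engine of the sign-change count: as $w$ sweeps $(-\lambda,\lambda)$ it forces exactly $n$ sign changes of the oscillatory term, hence $n$ real zeros, which by degree is all of them, and density on $(-\lambda,\lambda)$ follows since $\theta$ varies continuously over a full range.

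For necessity, the condition $b>0$ is immediate from $P_2(z)=a^2z^2-b$, whose zeros $z=\pm\sqrt{b}/|a|$ are real only when $b>0$. For $\alpha>1/9$ with $b>0$, the critical equation $\alpha u^2-(1-3\alpha)u+1=0$ has no positive root $u$, so $\phi$ has no real branch point and there is no real interval carrying a dominant complex-conjugate pair; the locus $\Gamma$ on which the two smallest-modulus roots tie then leaves the real axis, and by Beraha, Kahane and Weiss the zeros of $P_n$ accumulate on this non-real set, so infinitely many $P_n$ fail to be hyperbolic.

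I expect the main obstacle to be upgrading the asymptotic statement into the exact claim that \emph{every} $P_n$, not merely $P_n$ for large $n$, is hyperbolic. This requires controlling the subdominant real-root contribution uniformly in $n$ and converting the heuristic oscillation count into a rigorous lower bound of $n$ real zeros; the monotonicity lemma of Section 3 is exactly the tool that makes this count sharp. The delicate remaining point is verifying that the dominant complex-conjugate configuration persists across the entire closed interval, in particular understanding the endpoints, where the conjugate pair degenerates into the real double root detected by $\phi'=0$, since the whole sign-change argument hinges on that configuration being in force throughout $(-\lambda,\lambda)$.
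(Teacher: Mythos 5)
Your overall architecture coincides with the paper's: normalize via $t\mapsto t/\sqrt{b}$ and $z\mapsto (a/\sqrt{b})z$ to the one-parameter family in $\alpha=c/(ab)$; parametrize real $z$ by the argument $\theta$ of the conjugate pair of roots; use the partial-fraction representation $P_n=\sum_k\alpha_k(z)\beta_k(z)^n$ together with a sign-change count driven by the monotonicity result of Section 3; and invoke a Beraha--Kahane--Weiss-type theorem for necessity. Two of your variants are genuinely more economical than the paper's: the branch-point computation $\phi'(s)=0\iff\alpha u^2-(1-3\alpha)u+1=0$ with $u=s^2$, whose discriminant $9\alpha^2-10\alpha+1$ recovers the threshold $\alpha\le 1/9$ and whose admissible critical point yields $\lambda$ directly (both give $\lambda=\sqrt3$ at $\alpha=1/9$), where the paper instead extracts $\Delta(\theta)$ by Vieta elimination and gets $\lambda$ as $\lim_{\theta\to\pi^-}z(\theta)$; and your observation $P_2(z)=a^2z^2-b$ settles the necessity of $b>0$ for the theorem \emph{as stated} in one line, whereas the paper proves the stronger claim that $P_n$ is non-hyperbolic for all large $n$ (Proposition \ref{b}) via a reciprocal-polynomial perturbation. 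That said, there are two genuine gaps.

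First, in the sufficiency count you place the delicate degeneration at the endpoints $\pm\lambda$, but the actual obstruction is in the \emph{interior}, at $z=0$, i.e.\ $\theta=\pi/2$. There the leading coefficient $\alpha z$ of the cubic vanishes, the real root $t_3=\tau\zeta$ escapes to infinity ($\zeta$ has a pole at $\pi/2$), and the amplitude $(\zeta-\cos\theta)/\sin\theta$ of the oscillatory term blows up, so $g_n$ has a vertical asymptote inside one of the sign-change cells and the naive count produces only $n-2$ zeros, not $n$. The paper spends Lemmas \ref{second}, \ref{third} and \ref{odd} recovering the missing zeros: for $0<\alpha\le 1/9$ a case analysis on $n\bmod 4$ of the one-sided limits of $g_n$ at $\pi/2$, and for odd $n$ (where $\pi/2$ is itself a node) the separate observation that $P_n(0)=0$, read off from the generating function at $z=0$. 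Without an analogue of this, your claim of ``exactly $n$ sign changes'' is false as stated.

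Second, your necessity argument for $\alpha>1/9$ --- ``no real branch point, hence the tie locus $\Gamma$ leaves the real axis'' --- is not a proof. Absence of real solutions of $\phi'(s)=0$ rules out real double roots, but for real $z$ any complex-conjugate pair is automatically a modulus tie, so it does not by itself exclude that the set where the two smallest-modulus roots tie is contained in $\mathbb{R}$. One must exhibit a concrete non-real point of the limit set: the paper does this for $\alpha>1$ with $z^*=i$, using the symmetry $t\mapsto-\bar{t}$ of the root set and a cubic-discriminant computation to check the conjugate pair is dominant, and for $1/9<\alpha\le 1$ by choosing $\theta^*$ near $0$ with $\Delta(\theta^*)<0$, which forces $\zeta(\theta^*)\notin\mathbb{R}$ and then $z(\theta^*)\notin\mathbb{R}$ with $|t_1|=|t_2|\le|t_3|$. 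Note also that Beraha--Kahane--Weiss as you invoke it is false without a non-degeneracy hypothesis; the paper uses Sokal's version and explicitly verifies both the ``no degenerate dominance'' condition (via Vieta, the set where $\beta_1\equiv\omega\beta_2$ is finite, hence has empty interior) and the analyticity of the root functions $t_i(z)$ via the Implicit Function Theorem. Your sketch omits both verifications, and they are exactly where the hypothesis $z^*\notin\mathbb{R}$ with distinct nonzero roots earns its keep.
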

If $c=0$ and $a$ and $b$ are nonzero, the denominator becomes a quadratic, which was studied in \cite{Tran1}. If $b=0$ and $a$ and $c$ are nonzero, the denominator becomes a three-term cubic, which was also studied in \cite{Tran1}. If $a=0$ and $b$ and $c$ are nonzero, the denominator becomes a different type of three-term cubic, which was studied in \cite{Zumba}. We will show later that if $b<0$ then $P_n(z)$ is not hyperbolic for all large $n$ (c.f. Proposition \ref{b}).
To simplify the proof, we make the substitutions $\displaystyle t\to\frac{t}{\sqrt{b}}$, $\displaystyle\frac{a}{\sqrt{b}}z\to z'$, and $\displaystyle\frac{c}{ab}\to\alpha$, and the theorem can be restated in the following equivalent form.
\begin{theorem}\label{main3}
The zeros of the sequence $\{P_n(z)\}_{n=0}^\infty$ generated by 
\[
\sum_{n=0}^\infty P_n(z)t^n=\frac{1}{1+zt+t^2+\alpha zt^3},\quad \alpha\in\mathbb{R}\setminus\{0\},
\]
are real if and only if $\displaystyle\alpha\leq\frac{1}{9}$, in which case they lie on the interval $(-\lambda,\lambda)$
where 
\[
\lambda:=\frac{4}{\left(\frac{3\alpha+1+\sqrt{9\alpha^2-10\alpha+1}}{-5\alpha+1+\sqrt{9\alpha^2-10\alpha+1}}\right)^{\frac{3}{2}}\left(-5\alpha+1+\sqrt{9\alpha^2-10\alpha+1}\right)}.
\]
Furthermore, the set of zeros of $P_n(z)$ is dense on $(-\lambda,\lambda)$ as $n\to\infty$.
\end{theorem}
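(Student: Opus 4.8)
The plan is to read off the behavior of $P_n(z)$ from the generating function in Theorem~\ref{main3} via partial fractions. For fixed $z$, I would factor the denominator as a cubic in $t$,
\[
1+zt+t^2+\alpha z t^3=\alpha z\prod_{j=1}^{3}\bigl(t-t_j(z)\bigr),
\]
so that expanding each $1/(t-t_j)$ as a geometric series yields
\[
P_n(z)=\sum_{j=1}^{3}\frac{c_j(z)}{t_j(z)^{\,n+1}},
\]
with $c_j(z)$ the usual partial-fraction residues. Hence, for large $n$, $P_n(z)$ is governed by the root(s) $t_j(z)$ of \emph{smallest} modulus, and by the Beraha--Kahane--Weiss principle the accumulation set of $\bigcup_n \mathcal{Z}(P_n)$ is (away from degenerate points, where some $c_j$ vanishes) the locus of $z$ at which the two smallest-modulus roots have equal modulus. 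The argument then reduces to locating this equal-modulus locus and showing that, precisely when $\alpha\le\frac19$, it is the real segment $(-\lambda,\lambda)$.

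For the sufficiency direction I would exploit that, since the coefficients are real, for real $z$ the roots are either all real or consist of one real root $s(z)$ and a conjugate pair $r(z)e^{\pm i\theta(z)}$; the latter configuration automatically supplies two roots of equal modulus $r$. Writing Vieta's relations for the cubic,
\[
t_1t_2t_3=t_1+t_2+t_3=-\frac{1}{\alpha z},\qquad t_1t_2+t_1t_3+t_2t_3=\frac{1}{\alpha},
\]
and substituting $t_1=re^{i\theta}$, $t_2=re^{-i\theta}$, $t_3=s$, I would eliminate variables to obtain a one-parameter description of the conjugate-pair branch and, in particular, an explicit ``special function'' $z=z(\theta)$ (equivalently a parametrization by the real root $s$). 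The monotonicity of this function, established in Section~3, is what I expect to do the real work: it guarantees that $\theta\mapsto z(\theta)$ is a bijection from the admissible $\theta$-interval onto $(-\lambda,\lambda)$. Granting monotonicity, the dominant balance $P_n(z)\sim 2\,\mathrm{Re}\!\left(c_1(z)\,r(z)^{-(n+1)}e^{-i(n+1)\theta(z)}\right)$ forces a sign change whenever the phase $(n+1)\theta(z)+\arg c_1(z)$ crosses a multiple of $\pi$; as $z$ sweeps $(-\lambda,\lambda)$ this occurs on the order of $n$ times, producing real zeros that fill $(-\lambda,\lambda)$ densely as $n\to\infty$. The point needing care is the verification that the conjugate pair is \emph{strictly} dominant, $r(z)<|s(z)|$, throughout the open interval, so that the two-term balance is legitimate and no complex zeros can appear.

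The endpoints $\pm\lambda$ and the threshold $\alpha=\frac19$ should both emerge from the degeneration of this branch. As $z\to\lambda^-$ the conjugate pair is forced to collide on the real axis ($\theta\to0$), i.e. the cubic in $t$ acquires a real double root; setting its discriminant to zero and solving for the critical $z$ should reproduce the stated value of $\lambda$. The radical $\sqrt{9\alpha^2-10\alpha+1}=\sqrt{(9\alpha-1)(\alpha-1)}$ is real exactly when $\alpha\le\frac19$ (or $\alpha\ge1$), which I read as the algebraic shadow of the geometric fact that for $\alpha\le\frac19$ this critical collision occurs at a genuine real $z$, pinning the equal-modulus locus to the real axis.

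For the necessity direction I would run the analysis in reverse: when $\alpha>\frac19$ the radical governing the critical configuration becomes non-real, the equal-modulus locus leaves $\mathbb{R}$, and a two-term balance between a genuinely complex conjugate pair produces non-real zeros of $P_n$ for all large $n$; the remaining case $b\le0$ is disposed of separately by Proposition~\ref{b}. The main obstacle, and the step I expect to carry the proof, is the monotonicity of the special function $z(\theta)$ together with the strict-dominance estimate $r<|s|$: these are what turn the Beraha--Kahane--Weiss heuristic into an honest proof that every zero of every $P_n$ is real and that the closure of $\bigcup_n\mathcal{Z}(P_n)$ is exactly $[-\lambda,\lambda]$.
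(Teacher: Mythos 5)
Your overall skeleton is the paper's: the same Vieta substitution $t_1=re^{i\theta}$, $t_2=re^{-i\theta}$, $t_3=s$ yielding the parametrization $z(\theta)$, the monotonicity lemma as pivot, the dominance estimate $r<|s|$ (the paper's Lemma \ref{one}, $|\zeta|>1$), and a Sokal-type theorem for necessity. The genuine gap is in how you count real zeros. Your counting runs through the asymptotic balance $P_n(z)\sim 2\operatorname{Re}\bigl(c_1(z)\,r(z)^{-(n+1)}e^{-i(n+1)\theta(z)}\bigr)$ and concludes sign changes ``on the order of $n$ times'' for large $n$. The theorem asserts that \emph{every} zero of \emph{every} $P_n$ is real, and an asymptotic count cannot deliver this: to exclude non-real zeros you must exhibit \emph{exactly} $n$ real zeros of the degree-$n$ polynomial $P_n$ (for each fixed $n$, including small $n$) and then invoke the Fundamental Theorem of Algebra; ``order $n$'' and ``large $n$'' both fall short, and no Beraha--Kahane--Weiss statement about limit sets rules out stray complex zeros at finite $n$. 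The paper gets exactness by replacing the two-term balance with the identity $P_n(z(\theta))=0$ iff $g_n(\theta)=\frac{(\zeta-\cos\theta)\sin(n+1)\theta}{\sin\theta}-\cos(n+1)\theta+\zeta^{-(n+1)}=0$, evaluated at $\theta=k\pi/(n+1)$: there $\sin(n+1)\theta$ vanishes \emph{exactly}, so the sign is $\mp1+\zeta^{-(n+1)}$, pinned by $|\zeta|>1$ for all $n$ at once. Even with this exact device the count is delicate precisely where your sketch is silent: for $0<\alpha\le 1/9$ the function $\zeta$ has a vertical asymptote at $\theta=\pi/2$, and the generic subinterval count yields only $n-2$ zeros; the paper spends Lemmas \ref{second} and \ref{third} recovering two more zeros straddling $\pi/2$ (even $n$) or one more (odd $n$), plus Lemma \ref{odd} showing $P_n(0)=0$ for odd $n$ --- a zero your parametrization misses entirely, since $z=0$ corresponds to the degenerate value $\theta=\pi/2$ where $t_3=\tau\zeta$ escapes to infinity.

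The necessity direction also has a hole at $\alpha\ge 1$. Your mechanism --- ``the radical becomes non-real, so the equal-modulus locus leaves $\mathbb{R}$'' --- only covers $1/9<\alpha<1$, since, as your own parenthetical notes, $\sqrt{(9\alpha-1)(\alpha-1)}$ is real again for $\alpha\ge1$. The paper handles $\alpha>1$ by a separate construction: at the explicit non-real point $z=i$, the substitution $it\to y$ gives the real cubic $1+y-y^2-\alpha y^3$ with discriminant $5+22\alpha-27\alpha^2<0$, forcing a conjugate pair, and a Vieta computation shows the real root exceeds $\alpha^{-1/3}$ in modulus, so $|t_1|=|t_2|\le|t_3|$ and Sokal's condition (2) holds at $z=i$. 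More generally, ``the locus leaves $\mathbb{R}$'' must be cashed out as an actual witness: a specific $z^*\in\mathbb{C}\setminus\mathbb{R}$ with distinct nonzero roots satisfying $|t_1(z^*)|=|t_2(z^*)|\le|t_3(z^*)|$ (the paper produces one for $1/9<\alpha\le1$ by continuity from $\Delta(\theta^*)<0$ near $\theta=0$, together with a conjugate-pair argument showing $z(\theta^*)\notin\mathbb{R}$), and Sokal's no-degenerate-dominance hypothesis must be verified (the paper does so by showing the degeneracy set is finite). Without these, your necessity argument remains at the level of the heuristic you correctly identify it as.
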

To prove this theorem, we must establish several lemmas. We begin by defining a few terms. 
\begin{definition}\label{functions}
For $\theta\in(0,\pi)$, define the functions
\begin{align*}
t_1(\theta)&:=\tau(\theta)e^{-i\theta}, \\
t_2(\theta)&:=\tau(\theta)e^{i\theta}, \\
t_3(\theta)&:=\tau(\theta)\zeta(\theta), \\
z(\theta)&:=\frac{-1}{\alpha\tau^3(\theta)\zeta(\theta)}, \\
\Delta(\theta)&:=16\alpha^2\cos^4\theta-8\alpha^2\cos^2\theta-8\alpha\cos^2\theta+\alpha^2-2\alpha+1,
\end{align*}
where 
\begin{align*}
\tau(\theta)&:=\sqrt{\frac{2\cos\theta+\zeta(\theta)}{\zeta(\theta)}}\qquad \text{ and} \\
\zeta(\theta)&:=\frac{-4\alpha\cos^2\theta-\alpha+1+\sqrt{\Delta(\theta)}}{4\alpha\cos\theta}.
\end{align*}
\end{definition}
The following three lemmas establish the fact that $\tau,\ \zeta$, and $z$ are well-defined.
\begin{lemma}\label{delta}
For $\displaystyle0\neq\alpha\leq\frac{1}{9}$ and $\theta\in(0,\pi)$, $\Delta(\theta)>0$.
\end{lemma}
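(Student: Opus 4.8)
The plan is to treat $\Delta$ as a quadratic polynomial in the single variable $u := \cos^2\theta$ and to show it stays positive on the range of $u$. First I would observe that since $\theta \in (0,\pi)$ we have $\cos\theta \in (-1,1)$, so $u = \cos^2\theta$ ranges over $[0,1)$. Grouping the terms of $\Delta$ by powers of $\cos^2\theta$ and using $\alpha^2 - 2\alpha + 1 = (\alpha-1)^2$ gives
\[
\Delta = f(u), \qquad f(u) := 16\alpha^2 u^2 - 8\alpha(\alpha+1)u + (\alpha-1)^2,
\]
a quadratic in $u$ whose leading coefficient $16\alpha^2$ is strictly positive because $\alpha \neq 0$. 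Thus $f$ is an upward-opening parabola, and it suffices to control its behavior on $[0,1)$.

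Next I would compute the discriminant of $f$ in $u$. A short calculation gives
\[
\bigl[8\alpha(\alpha+1)\bigr]^2 - 4\cdot 16\alpha^2 (\alpha-1)^2 = 64\alpha^2\bigl[(\alpha+1)^2-(\alpha-1)^2\bigr] = 256\alpha^3 .
\]
This immediately disposes of the case $\alpha < 0$: the discriminant is then negative while the leading coefficient is positive, so $f(u) > 0$ for every real $u$, and in particular for $u \in [0,1)$.

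It remains to handle $0 < \alpha \le \tfrac19$. Here the discriminant is positive, so $f$ has two real roots, which factor cleanly as
\[
u_\pm = \frac{(\alpha+1)\pm 2\sqrt{\alpha}}{4\alpha} = \frac{(\sqrt{\alpha}\pm 1)^2}{4\alpha},
\]
since $(\alpha+1)\pm 2\sqrt{\alpha} = (\sqrt{\alpha}\pm 1)^2$. Because $f$ opens upward, $f(u) > 0$ precisely outside the closed interval $[u_-, u_+]$, so it is enough to show the smaller root satisfies $u_- \ge 1$; this forces $f > 0$ on $[0,1)$. Writing $s := \sqrt{\alpha}$, the inequality $u_- \ge 1$ reads $(1-s)^2 \ge 4\alpha = 4s^2$, i.e. $1 - 2s - 3s^2 \ge 0$, which factors as $(1-3s)(1+s) \ge 0$. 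Since $s > 0$ makes $1+s > 0$, this is equivalent to $s \le \tfrac13$, i.e. $\alpha \le \tfrac19$ — exactly the hypothesis. Hence $u_- \ge 1 > u$ for every $u \in [0,1)$, giving $f(u) > 0$ and therefore $\Delta(\theta) > 0$.

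The argument is essentially a root-location computation, so I do not anticipate a serious obstacle; the one point requiring care is the bookkeeping of the two sign regimes of $\alpha$ (the discriminant $256\alpha^3$ changes sign at $0$), together with spotting the perfect-square factorizations $(\sqrt{\alpha}\pm 1)^2$ and $(1-3s)(1+s)$ that make the threshold $\alpha = \tfrac19$ transparent.
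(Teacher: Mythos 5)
Your proof is correct, but it takes a genuinely different route from the paper's. The paper treats $\alpha=1/9$ as a separate case and then, for $\alpha<1/9$, runs a calculus argument: it differentiates $\Delta$ with respect to $\theta$, locates the relative extrema at $\theta=0,\ \pi/2,\ \pi$ and $\cos^{-1}\bigl(\pm\sqrt{(\alpha+1)/(4\alpha)}\bigr)$ (the latter existing only for $\alpha\le-1$ once $\alpha<1/9$ is imposed), and verifies positivity of $\Delta$ at each: $\Delta(0)=\Delta(\pi)=(9\alpha-1)(\alpha-1)$, $\Delta(\pi/2)=(\alpha-1)^2$, and $-4\alpha$ at the interior inverse-cosine points. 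Your substitution $u=\cos^2\theta$ replaces all of this with a single root-location computation for the quadratic $f(u)=16\alpha^2u^2-8\alpha(\alpha+1)u+(\alpha-1)^2$ on $[0,1)$: the discriminant $256\alpha^3$ disposes of $\alpha<0$ outright, and for $0<\alpha\le1/9$ the perfect-square roots $(\sqrt{\alpha}\pm1)^2/(4\alpha)$ show the upward parabola's smaller root is $\ge1$, clear of the range of $u$, precisely when $(1-3\sqrt{\alpha})(1+\sqrt{\alpha})\ge0$, i.e.\ $\alpha\le1/9$. Your version is more elementary (no differentiation, no existence analysis for critical points, no separate $\alpha=1/9$ case, and the degenerate endpoint $u_-=1$ at $\alpha=1/9$ is handled automatically by the strict inequality $u<1$), and it makes the threshold $1/9$ transparent rather than emergent from checking extremal values. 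Note also that $f(1)=9\alpha^2-10\alpha+1$ is exactly the boundary quantity $(9\alpha-1)(\alpha-1)$ the paper evaluates at $\theta=0,\pi$, so both arguments ultimately hinge on the same boundary behavior; the paper's method buys only the explicit list of extremal values of $\Delta$, which is not used elsewhere.
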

\begin{proof}
First suppose $\alpha=1/9$. Then we have
$\Delta(\theta)=\frac{16}{81}(\cos^4\theta-5\cos^2\theta+4)$,
which has zeros at $\theta=0$ and $\theta=\pi$, and none in between. Since $\pi/2\in(0,\pi)$ and 
$\Delta\left(\frac{\pi}{2}\right)=\frac{64}{81}>0$,
and since $\Delta(\theta)$ is continuous on $\mathbb{R}$, we have that $\Delta(\theta)>0$ for $\alpha=1/9$. \\
Now suppose $\alpha<1/9$ and $\alpha\neq0$. 
By differentiating $\Delta(\theta)$ with respect to $\theta$, we find that the relative extrema occur at $\theta=0,\ \pi/2,\ \pi$, and $\cos^{-1}(\pm\sqrt{\frac{\alpha+1}{4\alpha}})$.
There are two conditions that must be met for the latter solutions to exist. First, $\frac{\alpha+1}{4\alpha}$ must be greater than or equal to zero, which is not the case for $\alpha\in(-1,0]$. Furthermore, for $\pm\sqrt{\frac{\alpha+1}{4\alpha}}$ to be in the domain of the inverse cosine function, we must have 
$\left|\pm\sqrt{\frac{\alpha+1}{4\alpha}}\right|\leq1$,
or equivalently,
$\alpha\in(-\infty,0)\cup[1/3,\infty)$.
Since we are assuming that $\alpha<1/9$, we deduce that the inverse cosine solutions only exist for $\alpha\leq-1$. Observe that $\Delta\left(\cos^{-1}\left(\pm\sqrt{\frac{\alpha+1}{4\alpha}}\right)\right)=-4\alpha$,
which is positive for all $\alpha\leq-1$.
Additionally,
$\Delta(0)=\Delta(\pi)=9\alpha^2-10\alpha+1=(9\alpha-1)(\alpha-1)$,
which is positive for all $\alpha<1/9$.
We also have
$\Delta(\pi/2)=\alpha^2-2\alpha+1=(\alpha-1)^2$,
which is positive for all $\alpha\neq1$. 
\\
Since all relative extrema on $(0,\pi)$ give positive values of $\Delta(\theta)$, and $\Delta(\theta)$ is continuous on $\mathbb{R}$, we have that $\Delta(\theta)>0$ for $\theta\in(0,\pi)$.
\end{proof}
\begin{lemma}\label{one}
For $\theta\in(0,\pi)$, let $\zeta$ be as in Definition \ref{functions} with $4\alpha\cos\theta\neq0$. Then $|\zeta|>1$.
\end{lemma}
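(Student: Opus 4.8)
The plan is to exploit the fact that $\zeta$ is one of the two roots of a quadratic whose roots have product exactly $1$. Starting from the quadratic formula, one checks directly that the $\zeta$ of Definition \ref{functions} is the root taken with the $+$ sign of
\[
q(\zeta):=2\alpha\cos\theta\,\zeta^2+(4\alpha\cos^2\theta+\alpha-1)\zeta+2\alpha\cos\theta=0,
\]
whose discriminant is precisely $\Delta(\theta)$; this is the short expansion $(4\alpha\cos^2\theta+\alpha-1)^2-16\alpha^2\cos^2\theta=\Delta(\theta)$. (This $q$ is what one obtains by imposing on $t_1,t_2,t_3$ the symmetric-function identities forced by the denominator $1+zt+t^2+\alpha zt^3$, but for this lemma it is enough to treat it as the polynomial satisfied by $\zeta$.) The decisive structural feature is that the product of the two roots of $q$ equals its constant term over its leading coefficient, namely $\tfrac{2\alpha\cos\theta}{2\alpha\cos\theta}=1$.

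Next I would invoke Lemma \ref{delta}: since $0\neq\alpha\le\tfrac{1}{9}$ forces $\Delta(\theta)>0$ on $(0,\pi)$, the two roots $\zeta_+,\zeta_-$ of $q$ are real, distinct, and (having product $1$) nonzero of common sign, so exactly one of them has modulus strictly greater than $1$. It remains only to confirm that the $+$ root $\zeta=\zeta_+$ is the one of larger modulus. Because $\zeta_+\zeta_-=1$, this is equivalent to $\zeta_+^2>\zeta_-^2$, which I would establish by factoring
\[
\zeta_+^2-\zeta_-^2=(\zeta_+-\zeta_-)(\zeta_++\zeta_-)=\frac{\sqrt{\Delta(\theta)}}{2\alpha\cos\theta}\cdot\frac{-(4\alpha\cos^2\theta+\alpha-1)}{2\alpha\cos\theta}=\frac{-(4\alpha\cos^2\theta+\alpha-1)\sqrt{\Delta(\theta)}}{4\alpha^2\cos^2\theta},
\]
using the difference and sum of the roots of $q$. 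The denominator $4\alpha^2\cos^2\theta$ is positive (this is where the hypothesis $4\alpha\cos\theta\neq0$ is used) and $\sqrt{\Delta(\theta)}>0$, so the sign of $\zeta_+^2-\zeta_-^2$ is that of $-(4\alpha\cos^2\theta+\alpha-1)$.

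The final step is the sign analysis of $B:=4\alpha\cos^2\theta+\alpha-1=\alpha(4\cos^2\theta+1)-1$, which I claim is negative throughout. For $\alpha<0$ this is immediate, since $\alpha(4\cos^2\theta+1)<0<1$. For $0<\alpha\le\tfrac{1}{9}$ I would use $\cos^2\theta<1$ on $(0,\pi)$, whence $4\cos^2\theta+1<5$ and $\alpha(4\cos^2\theta+1)<5\alpha\le\tfrac{5}{9}<1$, so $B<0$ as well. Consequently $\zeta_+^2-\zeta_-^2>0$, and combined with $\zeta_+^2\zeta_-^2=1$ this yields $\zeta_+^2>1$, i.e.\ $|\zeta|>1$. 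The only real subtlety — and the step I would flag as the crux — is recognizing that $\zeta$ is governed by a reciprocal-root quadratic; once the product of roots is seen to be $1$, the problem collapses to a one-line sign check, and the hypothesis $\alpha\le\tfrac{1}{9}$ enters solely through making $B$ negative.
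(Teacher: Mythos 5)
Your proof is correct, and it takes a genuinely different route from the paper's. The paper establishes the dichotomy by evaluating the quadratic at $\pm1$: it computes $f(-1)f(1)=-\Delta(\theta)<0$ (negative by Lemma \ref{delta}) and invokes the Intermediate Value Theorem to conclude that exactly one root lies outside $[-1,1]$; it then identifies that root as $\zeta_+$ by comparing the numerators $-4\alpha\cos^2\theta-\alpha+1\pm\sqrt{\Delta(\theta)}$, using the positivity of the shared term (for $0<\alpha\le 1/9$ via $-5\alpha+1>0$ and the triangle inequality). You instead exploit the self-reciprocal structure of $q$: the product of roots is $1$, which together with reality and distinctness of $\zeta_\pm$ (from $\Delta(\theta)>0$) forces exactly one root to have modulus greater than $1$, and you decide which via the factorization $\zeta_+^2-\zeta_-^2=-(4\alpha\cos^2\theta+\alpha-1)\sqrt{\Delta(\theta)}\big/\bigl(4\alpha^2\cos^2\theta\bigr)$. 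Both arguments ultimately hinge on the same inequality, $1-\alpha-4\alpha\cos^2\theta>0$, established by essentially the same estimate; but your Vieta route replaces the IVT step with pure algebra, gives the complementary bound $|\zeta_-|<1$ for free, and explains conceptually why the paper's identity $f(-1)f(1)=-\Delta(\theta)$ holds at all: for a palindromic quadratic $a\zeta^2+b\zeta+a$ one has $f(1)f(-1)=(2a+b)(2a-b)=4a^2-b^2$, exactly minus the discriminant. One small correction to your closing remark: the hypothesis $\alpha\le 1/9$ does not enter \emph{solely} through the sign of $B$ --- it is also what makes Lemma \ref{delta} available, i.e.\ $\Delta(\theta)>0$ and hence the reality and distinctness of $\zeta_\pm$ on which your product-of-roots dichotomy rests; you invoked this correctly in the body of the argument but misdescribed it at the end.
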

\begin{proof}
When deriving the formula for $\zeta$ we used combinations of $t_1$, $t_2$, and $t_3$ with Vieta's formulas to obtain the function 
\begin{equation}\label{f}
f(\zeta):=2\alpha\cos\theta\zeta^2+(4\alpha\cos^2\theta+\alpha-1)\zeta+2\alpha\cos\theta,
\end{equation}
which has zeros 
\begin{align*}
\zeta_+&:=\zeta=\frac{-4\alpha\cos^2\theta-\alpha+1+\sqrt{\Delta(\theta)}}{4\alpha\cos\theta}\ \text{ and} \\
\zeta_-&:=\frac{-4\alpha\cos^2\theta-\alpha+1-\sqrt{\Delta(\theta)}}{4\alpha\cos\theta}.
\end{align*}
We have
$f(-1)(f(1)=-\Delta(\theta)$,
which is negative for $\theta\in(0,\pi)$ be Lemma \ref{delta}. Hence, by the Intermediate Value Theorem, exactly one of the zeros of $f(\zeta)$ lies outside the interval $[-1,1]$.
If $\alpha<0$, then $-4\alpha\cos^2\theta-\alpha+1>0$, which implies $|\zeta_+|>|\zeta_-|$.
If $0<\alpha\leq1/9$, then
$-4\alpha\cos^2\theta-\alpha+1\geq-5\alpha+1>0$,
and applying the triangle inequality to the numerators, we have
\[
|-4\alpha\cos^2\theta-\alpha+1-\sqrt{\Delta(\theta)}|\leq-4\alpha\cos^2\theta-\alpha+1+\sqrt{\Delta(\theta)},
\]
which again implies that $|\zeta_+|>|\zeta_-|$. Hence we see
that $\zeta_+=\zeta$ must lie outside the interval $[-1,1]$ for all $\theta\in(0,\pi)$, and thus $|\zeta|>1\ \forall\theta\in(0,\pi)$. 
\end{proof}
\begin{lemma}\label{tau}
For $0\neq\alpha\leq1/9$ and $\theta\in(0,\pi)$, $\displaystyle\frac{2\cos\theta+\zeta}{\zeta}>0$.
\end{lemma}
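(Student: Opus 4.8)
The plan is to collapse the quantity $\dfrac{2\cos\theta+\zeta}{\zeta}$ into a single closed form and then decide its sign using one algebraic identity. First I would exploit the shape of the quadratic $f$ in \eqref{f}: its constant term and its leading coefficient are both $2\alpha\cos\theta$, so the product of its two roots is $1$. By Lemma \ref{one} the root $\zeta=\zeta_+$ lies outside $[-1,1]$, hence $\zeta_-=1/\zeta$, and therefore
\[
\frac{2\cos\theta+\zeta}{\zeta}=1+\frac{2\cos\theta}{\zeta}=1+2\zeta_-\cos\theta.
\]
Substituting the explicit expression for the companion root $\zeta_-$ from the proof of Lemma \ref{one} and clearing the factor $4\alpha\cos\theta$ collapses the right-hand side to
\[
\frac{2\cos\theta+\zeta}{\zeta}=\frac{\alpha+1-4\alpha\cos^2\theta-\sqrt{\Delta}}{2\alpha}.
\]
Note that $\sqrt{\Delta}$ is real and positive throughout by Lemma \ref{delta}.

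The crux is the identity $L^2-\Delta=4\alpha$, where $L:=\alpha+1-4\alpha\cos^2\theta$; this follows by direct expansion, with the $\cos^2\theta$ and $\cos^4\theta$ contributions cancelling against $\Delta$ and leaving exactly $4\alpha$. Once this is in hand, the sign of the whole quotient is governed entirely by the sign of $\alpha$. For $0<\alpha\le\tfrac19$ I would first check $L>0$, using $L\ge\alpha+1-4\alpha=1-3\alpha>0$; together with $L^2=\Delta+4\alpha>\Delta$ this gives $L>\sqrt{\Delta}$, so the numerator $L-\sqrt{\Delta}$ is positive while $2\alpha>0$, yielding a positive quotient. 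For $\alpha<0$ the identity instead gives $L^2=\Delta+4\alpha<\Delta$, hence $|L|<\sqrt{\Delta}$ and in particular $L-\sqrt{\Delta}<0$; since now $2\alpha<0$, the quotient is again positive. Thus $\dfrac{2\cos\theta+\zeta}{\zeta}>0$ in both regimes.

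The boundary value $\cos\theta=0$ (that is, $\theta=\pi/2$), which is excluded from the defining formula for $\zeta$, is absorbed painlessly: the closed form $\dfrac{L-\sqrt{\Delta}}{2\alpha}$ remains finite there, since $L=\alpha+1$ and $\Delta=(\alpha-1)^2$ give $\sqrt{\Delta}=1-\alpha$ and hence the value $\dfrac{(\alpha+1)-(1-\alpha)}{2\alpha}=1>0$, which is exactly the continuous extension of $\dfrac{2\cos\theta+\zeta}{\zeta}$ as $\theta\to\pi/2$.

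I expect the only genuine obstacle to be spotting and verifying the cancellation $L^2-\Delta=4\alpha$, after which the argument is pure sign bookkeeping driven by whether $\alpha$ is positive or negative. An alternative route that avoids the product-of-roots step is to combine $2\cos\theta+\zeta$ and $\zeta$ over their common denominator and show the resulting numerator $(1-\alpha)+4\alpha\cos^2\theta+\sqrt{\Delta}$ and denominator $(1-\alpha)-4\alpha\cos^2\theta+\sqrt{\Delta}$ are separately positive; this is structurally parallel but requires two companion identities instead of one, which is why I would favor the single-expression approach above.
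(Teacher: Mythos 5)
Your proof is correct, and it takes a genuinely different route from the paper's. You exploit the fact that the quadratic \eqref{f} has equal leading and constant coefficients, so its roots satisfy $\zeta_+\zeta_-=1$; this collapses $(2\cos\theta+\zeta)/\zeta=1+2\zeta_-\cos\theta$ into the single closed form $\bigl(L-\sqrt{\Delta}\bigr)/(2\alpha)$ with $L=\alpha+1-4\alpha\cos^2\theta$, and your key identity $L^2-\Delta=4\alpha$ does check out by direct expansion (the $\cos^2\theta$ and $\cos^4\theta$ terms cancel exactly). The sign bookkeeping is then sound in both regimes: for $0<\alpha\le\frac19$ you get $L\ge\alpha+1-4\alpha=1-3\alpha>0$ and $L^2=\Delta+4\alpha>\Delta$, hence $L>\sqrt{\Delta}$; for $\alpha<0$ you get $|L|<\sqrt{\Delta}$ with $2\alpha<0$. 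The paper instead argues in two structurally different cases: for $0<\alpha\le\frac19$ it writes the quotient as $\frac{4\alpha\cos^2\theta-\alpha+1+\sqrt{\Delta}}{-4\alpha\cos^2\theta-\alpha+1+\sqrt{\Delta}}$ and checks numerator and denominator positive separately (essentially the ``alternative route'' you mention at the end), while for $\alpha\le0$ it abandons closed forms altogether and shows $|\zeta|>|2\cos\theta|$ via the sign change $f(-2\cos\theta)f(2\cos\theta)=4\cos^2\theta(8\alpha\cos^2\theta+2\alpha-1)<0$ and the Intermediate Value Theorem, mirroring the root-location argument of Lemma \ref{one}. Your version buys uniformity: one algebraic identity handles both sign regimes, you use Lemma \ref{one} only to identify $\zeta=\zeta_+$ rather than needing $|\zeta|>1$, and your closed form extends continuously across the excluded point $\theta=\pi/2$ with value $1$, a point the paper's formulation leaves implicit (its Case 2 product $f(-2\cos\theta)f(2\cos\theta)$ actually vanishes at $\theta=\pi/2$, where $\zeta$ is undefined anyway). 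What the paper's route buys in exchange is independence from the reciprocal-root observation, at the cost of a second, qualitatively different argument for negative $\alpha$.
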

\begin{proof}
Case 1: Suppose $0<\alpha\leq1/9$. Note that 
\[
\frac{2\cos\theta+\zeta}{\zeta}=\frac{4\alpha\cos^2\theta-\alpha+1+\sqrt{\Delta(\theta)}}{-4\alpha\cos^2\theta-\alpha+1+\sqrt{\Delta(\theta)}}.
\]
We show that both the numerator, denoted $N(\theta)$, and denominator, denoted $D(\theta)$, are positive for all $0<\alpha\leq1/9$. Note that the smallest $D(\theta)$ can be is when $\alpha=1/9$ and $\cos^2\theta=1$. In that case, 
$D(\theta)=\frac{4}{9}+\sqrt{\Delta(\theta)}>0$
by Lemma \ref{delta}. Since $4\cos^2\theta-1<0$ on $(\pi/3,2\pi/3)$, the smallest it can be on that interval is when $\alpha=1/9$ and $4\cos^2\theta-1=-1$. In that case,
$N(\theta)=\frac{8}{9}+\sqrt{\Delta(\theta)}>0$
 by Lemma \ref{delta}. Since $4\cos^2\theta-1>0$ on $(0,\pi/3)\cup(2\pi/3,\pi)$, we get $N(\theta)=\alpha(4\cos^2\theta-1)+1+\sqrt{\Delta(\theta)}>0$ by Lemma \ref{delta}. \\
Case 2: Suppose $\alpha\leq0$. We show that
$\displaystyle\frac{2\cos\theta}{\zeta}+1>0$ by showing that $|\zeta|>|2\cos\theta|$ using a similar argument as in Lemma \ref{one}. From (\ref{f}) in Lemma \ref{one}, we have 
$f(-2\cos\theta)f(2\cos\theta)=4\cos^2\theta(8\alpha\cos^2\theta+2\alpha-1)<0$
since $\cos^2\theta>0$ and $\alpha<0$. Hence, by the Intermediate Value Theorem, exactly one of the zeros of $f(\zeta)$ lies outside the interval $[-2\cos\theta,2\cos\theta]$, and since $|\zeta_+|>|\zeta_-|$ by the proof of Lemma \ref{one}, we get that $|\zeta|>|2\cos\theta|$.
\end{proof}
Note that for each $\theta\in(0,\pi)$, $t_1:=t_1(\theta),\ t_2:=t_2(\theta)$, and $t_3:=t_3(\theta)$ are the zeros of the denominator of the generating function, $1+z(\theta)t+t^2+\alpha z(\theta)t^3$, since they satisfy the relations (known as Vieta's formulas)
\begin{align*}
t_1+t_2+t_3&=-\frac{1}{\alpha z}, \\
t_1t_2+t_1t_3+t_2t_3&=\frac{1}{\alpha}, \\
t_1t_2t_3&=-\frac{1}{\alpha z}.
\end{align*}

\section{THE MONOTONICITY OF $z(\theta)$}

In this section, we establish the fact that the function $z(\theta)$ is increasing on $(0,\pi)$.

\begin{lemma}\label{monotone}
The function $z(\theta)$ in Definition \ref{functions} is monotone increasing on $(0,\pi)$. 
\end{lemma}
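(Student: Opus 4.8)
The plan is to reparametrize by $x:=\cos\theta$, a strictly decreasing bijection of $(0,\pi)$ onto $(-1,1)$. Since $\Delta$, $\zeta$, $\tau$, and hence $z$ depend on $\theta$ only through $x$, and $\frac{dx}{d\theta}=-\sin\theta<0$, the assertion $z'(\theta)>0$ is equivalent to $\frac{dz}{dx}<0$. Before differentiating I would exploit the symmetry $x\mapsto -x$: the quadratic $f$ of (\ref{f}) satisfies $f(-\eta,-x)=-f(\eta,x)$, so its roots negate and the root of modulus $>1$ maps to its negative; thus $\zeta(-x)=-\zeta(x)$, $\tau(-x)=\tau(x)$, and therefore $z(-x)=-z(x)$, with $z=0$ at $\theta=\pi/2$. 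Hence it suffices to prove $\frac{dz}{dx}<0$ on $x\in(0,1)$ and extend by this oddness and continuity. On $(0,1)$ the sign information for $\zeta$ from Lemmas \ref{one} and \ref{tau} shows $z<0$ in both regimes $\alpha<0$ and $0<\alpha\le\frac19$, so the goal becomes $\frac{d}{dx}\log|z|>0$.

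Next I would logarithmically differentiate $z=-\frac{1}{\alpha\tau^3\zeta}$, eliminating $\tau$ through $\tau^2=\frac{2x+\zeta}{\zeta}$, and differentiate $f(\zeta,x)=0$ implicitly in $x$ to get $\frac{d\zeta}{dx}=-f_x/f_\zeta$. A direct manipulation reduces the logarithmic derivative to
\[
\frac{1}{z}\frac{dz}{dx}=-\frac{3\zeta f_\zeta-f_x(\zeta-x)}{\zeta(2x+\zeta)\,f_\zeta}.
\]
Two exact evaluations now make this transparent. Writing $P:=4\alpha x^2+\alpha-1$, the formula $\zeta=\frac{-P+\sqrt{\Delta}}{4\alpha x}$ gives $f_\zeta=4\alpha x\zeta+P=\sqrt{\Delta}>0$ by Lemma \ref{delta}; and the Vieta relations for $f$ (in particular $\zeta_+\zeta_-=1$, i.e. $\zeta^2+1=-\frac{P\zeta}{2\alpha x}$) give $f_x=2\alpha(\zeta^2+4x\zeta+1)=\frac{Q\zeta}{x}$, where $Q:=4\alpha x^2-\alpha+1$. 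Substituting these and clearing $4\alpha x^2$, the numerator collapses to $3\zeta f_\zeta-f_x(\zeta-x)=\frac{\zeta\,R(\sqrt{\Delta}+Q)}{4\alpha x^2}$ with $R:=8\alpha x^2+\alpha-1$, so that after cancelling $\zeta$,
\[
\frac{1}{z}\frac{dz}{dx}=-\frac{R(\sqrt{\Delta}+Q)}{4\alpha x^2(2x+\zeta)\sqrt{\Delta}}.
\]

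It then remains to read off signs. By Lemma \ref{tau}, $\frac{2x+\zeta}{\zeta}=\tau^2>0$, so $2x+\zeta$ carries the sign of $\zeta$, which the proof of Lemma \ref{one} pins down ($\zeta>0$ when $\alpha>0$, $\zeta<0$ when $\alpha<0$); in both regimes $4\alpha x^2(2x+\zeta)\sqrt{\Delta}>0$ on $(0,1)$. For the remaining factor I would use the identity $\Delta=Q^2-16\alpha x^2$ (a one-line expansion). When $\alpha<0$ this gives $\sqrt{\Delta}>|Q|$, hence $\sqrt{\Delta}+Q>0$; when $0<\alpha\le\frac19$ one has $Q\ge 1-\alpha>0$ directly, so again $\sqrt{\Delta}+Q>0$. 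Finally $R=8\alpha x^2+\alpha-1<0$ on $(0,1)$: for $\alpha<0$ every summand is negative, and for $0<\alpha\le\frac19$ the bound $x^2<1$ yields $R<9\alpha-1\le 0$. Therefore $R(\sqrt{\Delta}+Q)<0$, so $\frac{1}{z}\frac{dz}{dx}>0$; since $z<0$ this gives $\frac{dz}{dx}<0$, i.e. $z'(\theta)>0$ on $(0,\pi/2)$, and oddness promotes this to all of $(0,\pi)$.

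The main difficulty is not the final sign bookkeeping but uncovering the collapse of the numerator to $\frac{\zeta R(\sqrt{\Delta}+Q)}{4\alpha x^2}$. This hinges on treating $\zeta$ through the relation $f(\zeta,x)=0$ rather than expanding the radical: the clean evaluations $f_\zeta=\sqrt{\Delta}$ and $f_x=\frac{Q\zeta}{x}$, together with $\Delta=Q^2-16\alpha x^2$, are exactly what turn an opaque expression in $\sqrt{\Delta}$ into a product of factors whose signs are immediate. I would therefore organize the proof so that these three identities are established first, after which monotonicity follows from the case split above.
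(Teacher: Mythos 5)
Your proposal is correct, and it takes a genuinely different route from the paper. The paper works with the complex root $t_1=\tau e^{-i\theta}$, writes $z=-\frac{t_1^2+1}{t_1+\alpha t_1^3}$, logarithmically differentiates, and exploits the reality of $\frac{dz}{z\,d\theta}$ to obtain $\frac{dz}{d\theta}=\frac{z|h(t_1)|^2}{\I h(t_1)}$; it then computes $\I h(t_1)$ explicitly with two different auxiliary functions $h_1,h_2$ for $\alpha<0$ and $0<\alpha\le\frac19$, reading the sign from $\sin 2\theta$ together with $1+|\alpha|+|\alpha|\tau^4+\alpha^2\tau^4>0$, respectively $(1-\alpha)(1-\alpha\tau^4)>0$, the latter requiring the bound $\tau^4<9$ derived from Lemma \ref{one}; the casework over $(0,\pi/2)$ and $(\pi/2,\pi)$ is done separately in each regime. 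You instead stay in real algebra: I checked your identities and they all hold --- with $P=4\alpha x^2+\alpha-1$, $Q=4\alpha x^2-\alpha+1$, $R=8\alpha x^2+\alpha-1$ one indeed has $f_\zeta=4\alpha x\zeta+P=\sqrt{\Delta}$, $f_x=2\alpha(\zeta^2+4x\zeta+1)=\frac{Q\zeta}{x}$ via $\zeta^2+1=-\frac{P\zeta}{2\alpha x}$, the collapse $3\zeta f_\zeta-f_x(\zeta-x)=\frac{\zeta R\left(\sqrt{\Delta}+Q\right)}{4\alpha x^2}$ (the key step: $12\alpha x^2-Q=4\alpha x^2+P=R$), and $\Delta=Q^2-16\alpha x^2$; the sign claims are also right ($\zeta>0$ for $0<\alpha\le\frac19$ and $\zeta<0$ for $\alpha<0$ on $x\in(0,1)$, hence $z<0$ and $4\alpha x^2(2x+\zeta)\sqrt{\Delta}>0$ in both regimes, while $R<0$ and $\sqrt{\Delta}+Q>0$). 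Your odd-symmetry reduction $\zeta(-x)=-\zeta(x)$, $\tau(-x)=\tau(x)$, $z(-x)=-z(x)$ is valid and halves the paper's casework over the two half-intervals; it also forces you to be explicit about the removable singularity at $\theta=\pi/2$ (where $\zeta\to\infty$, $\tau\to1$, $z\to0$), a point the paper passes over silently. As for what each approach buys: the paper's complex route never needs a closed form for $\zeta'$ or any manipulation of the radical, but pays with duplicated computations and the auxiliary bound $\tau^4<9$; your route pays the one-time algebraic cost of discovering the factored numerator, after which monotonicity is read off uniformly in $\alpha$ from a single exact formula, arguably a cleaner and more robust argument.
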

\begin{proof}
Since $t_1$ is a zero of $1+z(\theta)t+t^2+\alpha z(\theta)t^3$, 
\[
z(\theta)=-\frac{t_1^2+1}{t_1+\alpha t_1^3}.
\]
First suppose that $\alpha<0$. Then we have 
\[
z(\theta)=-\frac{(t_1+i)(t_1-i)}{t_1(\sqrt{|\alpha|}t_1+1)(\sqrt{|\alpha|}t_1-1)}.
\]
Taking the derivative by the product rule and then dividing by $z(\theta)$ implies that 
\[
\frac{dz}{z}=\frac{dt_1}{t_1+i}+\frac{dt_1}{t_1-i}-\frac{dt_1}{t_1}-\frac{\sqrt{|\alpha|}dt_1}{\sqrt{|\alpha|}t_1+1}-\frac{\sqrt{|\alpha|}dt_1}{\sqrt{|\alpha|}t_1-1}.
\]
Note that 
\[
dt_1=d(\tau e^{-i\theta})=-i\tau e^{-i\theta}d\theta+e^{-i\theta}d\tau=t_1\left(\frac{d\tau}{\tau}-id\theta\right),
\]
so if we let 
\[
h_1(t_1):=\frac{t_1}{t_1+i}+\frac{t_1}{t_1-i}-\frac{t_1\sqrt{|\alpha|}}{\sqrt{|\alpha|}t_1+1}-\frac{t_1\sqrt{|\alpha|}}{\sqrt{|\alpha|}t_1-1}-1,
\]
we have 
\[
\frac{dz}{z}=h_1(t_1)\left(\frac{d\tau}{\tau}-id\theta\right), 
\]
or equivalently,
\[
\frac{dz}{zd\theta}=h_1(t_1)\left(\frac{d\tau}{\tau d\theta}-i\right).
\]
Since $\displaystyle\frac{dz}{zd\theta}\in\mathbb{R}$ for each $\theta\in(0,\pi)$, we have 
\[
0=\I\frac{dz}{zd\theta}=\I\left(h_1(t_1)\frac{d\tau}{\tau d\theta}-h_1(t_1)i\right)=-\R h_1(t_1)+\I h_1(t_1)\frac{d\tau}{\tau d\theta},
\]
which implies 
\[
\I h_1(t_1)\frac{d\tau}{\tau d\theta}=\R h_1(t_1)
\]
and 
\begin{equation}\label{dz}
\frac{dz}{zd\theta}=\R h_1(t_1)\frac{d\tau}{\tau d\theta}+\I h_1(t_1).
\end{equation}
If we multiply both sides of Equation (\ref{dz}) by $\I h_1(t_1)$, we have 
\begin{align*}
\I h_1(t_1)\frac{dz}{zd\theta}&=\I h_1(t_1)\frac{d\tau}{\tau d\theta}\R h_1(t_1)+\I h_1(t_1)\I h_1(t_1) \\
&=\R h_1(t_1) \R h_1(t_1)+\I h_1(t_1)\I h_1(t_1) \\
&=(\R h_1(t_1))^2+(\I h_1(t_1))^2 \\
&=|h_1(t_1)|^2,
\end{align*}
which implies
\[
\frac{dz}{d\theta}=\frac{z|h_1(t_1)|^2}{\I h_1(t_1)}. 
\]
Since $|h_1(t_1)|^2>0$, the sign of $\displaystyle\frac{dz}{d\theta}$ depends on $z$ and $\I h_1(t_1)$. Note that
\begin{equation}\label{Im}
\I h_1(t_1)=\I\left(\frac{t_1}{t_1+i}+\frac{t_1}{t_1-i}-\frac{t_1\sqrt{|\alpha|}}{\sqrt{|\alpha|}t_1+1}-\frac{t_1\sqrt{|\alpha|}}{\sqrt{|\alpha|}t_1-1}-1\right). 
\end{equation}
Multiplying the top and bottom of each fraction by the conjugate of their respective denominator makes Equation (\ref{Im}) equivalent to 
\[
\I\left(\frac{|t_1|^2-t_1i}{|t_1+i|^2}+\frac{|t_1|^2+t_1i}{|t_1-i|^2}-\frac{|\alpha||t_1|^2+t_1\sqrt{|\alpha|}}{|\sqrt{|\alpha|}t_1+1|^2}-\frac{|\alpha||t_1|^2-t_1\sqrt{|\alpha|}}{|\sqrt{|\alpha|}t_1-1|^2}\right).
\]
Since $|t_1|^2\in\mathbb{R}$ and $|\alpha|\in\mathbb{R}$, $\I h_1(t_1)$ is equivalent to
\[
\I\left(-\frac{t_1i}{|t_1+i|^2}+\frac{t_1i}{|t_1-i|^2}-\frac{t_1\sqrt{|\alpha|}}{|\sqrt{|\alpha|}t_1+1|^2}+\frac{t_1\sqrt{|\alpha|}}{|\sqrt{|\alpha|}t_1-1|^2}\right).
\]
We next substitute the expression in Definition \ref{functions} for $t_1$ and find 
\begin{align*}
    \I h_1(t_1)=&-\frac{\tau\cos\theta}{\tau^2-2\tau\sin\theta+1}+\frac{\tau\cos\theta}{\tau^2+2\tau\sin\theta+1} \\
    &+\frac{\sqrt{|\alpha|}\tau\sin\theta}{|\alpha|\tau^2+2\sqrt{|\alpha|}\tau\cos\theta+1}-\frac{\sqrt{|\alpha|}\tau\sin\theta}{|\alpha|\tau^2-2\sqrt{|\alpha|}\tau\cos\theta+1}.
\end{align*}
When we combine, the common denominator will be a product of squares of modulus, and will thus be a positive value. Hence we can consider just the numerator, which becomes 
\begin{align*}
  &-4\alpha^2\tau^6\sin\theta\cos\theta-16|\alpha|\tau^4\sin\theta\cos\theta+16|\alpha|\tau^4\sin\theta\cos^3\theta \\
 &-4\tau^2\sin\theta\cos\theta-4|\alpha|\tau^6\sin\theta\cos\theta+16|\alpha|\tau^4\sin^3\theta\cos\theta-4|\alpha|\tau^2\sin\theta\cos\theta. 
\end{align*}
Applying the trigonometric identities $2\sin\theta\cos\theta=\sin2\theta$, and $\sin^2\theta+\cos^2\theta=1$, we obtain
\[
    \I h_1(t_1)=-2\tau^2(\sin2\theta+|\alpha|\sin2\theta+|\alpha|\tau^4\sin2\theta+\alpha^2\tau^4\sin2\theta).
\]
Since $-2\tau^2<0$, we analyze 
\[
\sin2\theta+|\alpha|\sin2\theta+|\alpha|\tau^4\sin2\theta+\alpha^2\tau^4\sin2\theta=\sin2\theta(1+|\alpha|+|\alpha|\tau^4+\alpha^2\tau^4)
\]
on the intervals $(0,\pi/2)$ and $(\pi/2,\pi)$. \\
On $(0,\pi/2)$, $z<0$ and $\sin2\theta>0$, and since $1+|\alpha|+|\alpha|\tau^4+\alpha^2\tau^4>0$, we have that $\I h_1(t_1)<0$, which implies that $\displaystyle\frac{dz}{d\theta}>0$ on $(0,\pi/2)$. \\
On $(\pi/2,\pi)$, $z>0$ and $\sin2\theta<0$, so we get that $\I h_1(t_1)>0$, which implies that $\displaystyle\frac{dz}{d\theta}>0$ on $(\pi/2,\pi)$. Thus $\displaystyle\frac{dz}{d\theta}>0$ on $(0,\pi)$ for $\alpha<0$. \\
Now suppose $0<\alpha\leq1/9$. Then 
\[
z=-\frac{(t_1+i)(t_1-i)}{t_1(\sqrt{\alpha}t_1+i)(\sqrt{\alpha}t_1-i)},
\]
and taking the derivative by the product rule and dividing by $z$ implies that 
\[
\frac{dz}{z}=\frac{dt_1}{t_1+i}+\frac{dt_1}{t_1-i}-\frac{dt_1}{t_1}-\frac{\sqrt{\alpha}dt_1}{\sqrt{\alpha}t_1+i}-\frac{\sqrt{\alpha}dt_1}{\sqrt{\alpha}t_1-i}.
\]
Using a similar process as above, we obtain
\[
\frac{dz}{d\theta}=\frac{z|h_2(t_1)|^2}{\I h_2(t_1)}
\]
where 
\[
h_2(t_1):=\frac{t_1}{t_1+i}+\frac{t_1}{t_1-i}-\frac{t_1\sqrt{\alpha}}{\sqrt{\alpha}t_1+i}-\frac{t_1\sqrt{\alpha}}{\sqrt{\alpha}t_1-i}-1.
\]
Since $|h_2(t_1)|^2>0$, the sign of $\displaystyle\frac{dz}{d\theta}$ again depends on $z$ and $\I h_2(t_1)$. We evaluate $\I h_2(t_1)$ in the same way. We have
\begin{equation}\label{Im3}
\I h_2(t_1)=\I\left(\frac{t_1}{t_1+i}+\frac{t_1}{t_1-i}-\frac{t_1\sqrt{\alpha}}{\sqrt{\alpha}t_1+i}-\frac{t_1\sqrt{\alpha}}{\sqrt{\alpha}t_1-i}-1\right).
\end{equation}
Multiplying the top and bottom of each fraction by the conjugate of their respective denominator makes Equation (\ref{Im3}) equivalent to
\[
\I\left(\frac{|t_1|^2-t_1i}{|t_1+i|^2}+\frac{|t_1|^2+t_1i}{|t_1-i|^2}+\frac{t_1\sqrt{\alpha}i-\alpha|t_1|^2}{|\sqrt{\alpha}t_1+i|^2}+\frac{-t_1\sqrt{\alpha}i-\alpha|t_1|^2}{|\sqrt{\alpha}t_1-i|^2}\right). 
\]
Since $|t_1|^2\in\mathbb{R}$ and $\sqrt{\alpha}\in\mathbb{R}$, $\I h_2(t_1)$ is equivalent to
\[
\I\left(-\frac{t_1i}{|t_1+i|^2}+\frac{t_1i}{|t_1-i|^2}+\frac{t_1\sqrt{\alpha}i}{|\sqrt{\alpha}t_1+i|^2}-\frac{t_1\sqrt{\alpha}i}{|\sqrt{\alpha}t_1-i|^2}\right).
\]
We next substitute the expression in Definition \ref{functions} for $t_1$ and find 
\begin{align*}
    \I h_2(t_1)=&-\frac{\tau\cos\theta}{\tau^2-2\tau\sin\theta+1}+\frac{\tau\cos\theta}{\tau^2+2\tau\sin\theta+1} \\
    &+\frac{\sqrt{\alpha}\tau\cos\theta}{\alpha\tau^2-2\sqrt{\alpha}\tau\sin\theta+1}-\frac{\sqrt{\alpha}\cos\theta}{\alpha\tau^2+2\sqrt{\alpha}\tau\sin\theta+1}.
\end{align*}
As before, when we combine, the common denominator will be a product of squares of modulus, and will thus be a positive value. Hence we can can consider just the numerator, which becomes 
\[
    -2\alpha^2\tau^6\sin\theta\cos\theta-4\tau^2\sin\theta\cos\theta+4\alpha\tau^6\sin\theta\cos\theta+4\alpha\tau^2\sin\theta\cos\theta.
\]
Applying the trigonometric identity $2\sin\theta\cos\theta=\sin2\theta$, we obtain
\[
    \I h_2(t_1)=-2\tau^2(\sin2\theta-\alpha\sin2\theta-\alpha\tau^4\sin2\theta+\alpha^2\tau^4\sin2\theta).
\]
Since $-2\tau^2<0$, we analyze 
\[
\sin2\theta-\alpha\sin2\theta-\alpha\tau^4\sin2\theta+\alpha^2\tau^4\sin2\theta=\sin2\theta(1-\alpha-\alpha\tau^4+\alpha^2\tau^4)
\]
on $(0,\pi/2)$ and $(\pi/2,\pi)$. \\
Suppose first that $\theta\in(0,\pi/2)$. Then $z<0$ and $\sin2\theta>0$. Note that 
\[
1-\alpha-\alpha\tau^4+\alpha^2\tau^4=(1-\alpha)(1-\alpha\tau^4).
\]
Since $1-\alpha>0$ for $0<\alpha\leq1/9$, we must show that $1-\alpha\tau^4>0$. Note that $1-\alpha\tau^4$ is smallest when $\alpha=1/9$. Note that $\displaystyle\tau^4=\left(\frac{2\cos\theta}{\zeta}+1\right)^2$, and since $|\tau|>1$ by Lemma \ref{one}, we have 
\[
0<\frac{2\cos\theta}{\zeta}+1<3\implies\tau^4<9.
\]
Hence $1-\alpha\tau^4>0$, and thus $\displaystyle\frac{dz}{d\theta}>0$ on $(0,\pi/2)$. \\
Now suppose $\theta\in(\pi/2,\pi)$. Then $z>0$ and $\sin2\theta<0$. With the same argument as above, we get that $1-\alpha\tau^4>0$, implying that $\displaystyle\frac{dz}{d\theta}>0$ on $(\pi/2,\pi)$. \\
Hence $\displaystyle\frac{dz}{d\theta}>0$ on $(0,\pi)$, and thus $z$ is increasing on $(0,\pi)$. 
\end{proof}
\begin{corollary}
For $\theta\in(0,\pi)$, $z\in(-\lambda,\lambda)$ where
\[
\lambda:=\frac{4}{\left(\frac{3\alpha+1+\sqrt{9\alpha^2-10\alpha+1}}{-5\alpha+1+\sqrt{9\alpha^2-10\alpha+1}}\right)^{\frac{3}{2}}\left(-5\alpha+1+\sqrt{9\alpha^2-10\alpha+1}\right)}.
\]
\end{corollary}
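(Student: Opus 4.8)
The plan is to combine the monotonicity established in Lemma \ref{monotone} with a direct computation of the two endpoint limits. Since $z(\theta)$ is strictly increasing on $(0,\pi)$ by Lemma \ref{monotone} and is continuous there (being built from continuous functions, with $z\to 0$ at $\theta=\pi/2$), its image is exactly the open interval $\bigl(\lim_{\theta\to 0^+} z(\theta),\ \lim_{\theta\to\pi^-} z(\theta)\bigr)$. Thus it suffices to show that these one-sided limits equal $-\lambda$ and $\lambda$, respectively. Because $4\alpha\cos\theta\to 4\alpha\neq 0$ and $\Delta(\theta)\to\Delta(0)=\Delta(\pi)=9\alpha^2-10\alpha+1\geq 0$ as $\theta\to 0^+$ or $\theta\to\pi^-$ (with equality only at $\alpha=1/9$, where $\sqrt{\Delta}$ simply vanishes and the formula still makes sense), the functions $\zeta$, $\tau$, and $z$ extend continuously to the endpoints, so the limits may be evaluated by substituting $\cos\theta=\pm1$ into Definition \ref{functions}.

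First I would handle $\theta\to 0^+$, where $\cos\theta\to 1$. Substituting into Definition \ref{functions} gives
\[
\zeta(0)=\frac{-5\alpha+1+\sqrt{9\alpha^2-10\alpha+1}}{4\alpha},\qquad \tau(0)^2=\frac{2+\zeta(0)}{\zeta(0)}=\frac{3\alpha+1+\sqrt{9\alpha^2-10\alpha+1}}{-5\alpha+1+\sqrt{9\alpha^2-10\alpha+1}},
\]
where the second equality follows after cancelling the common factor $4\alpha$. Since $\tau(0)>0$ by Lemma \ref{tau}, inserting $\alpha\zeta(0)=\tfrac14\bigl(-5\alpha+1+\sqrt{9\alpha^2-10\alpha+1}\bigr)$ and $\tau(0)^3=(\tau(0)^2)^{3/2}$ into $z=-1/(\alpha\tau^3\zeta)$ should collapse exactly to $z(0)=-\lambda$; the sign is consistent with $z<0$ on $(0,\pi/2)$ as noted in the proof of Lemma \ref{monotone}, confirming that $-\lambda$ is the infimum.

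Next I would treat $\theta\to\pi^-$, where $\cos\theta\to -1$. Here the only essential change is the sign of the denominator $4\alpha\cos\theta$, so that $\zeta(\pi)=\bigl(-5\alpha+1+\sqrt{9\alpha^2-10\alpha+1}\bigr)/(-4\alpha)$. The crucial observation is that the ratio defining $\tau^2$ is invariant: evaluating $\tau(\pi)^2=(2\cos\theta+\zeta)/\zeta$ at $\cos\theta=-1$ returns precisely the value $\tau(0)^2$ above. Consequently $\alpha\zeta(\pi)=-\tfrac14\bigl(-5\alpha+1+\sqrt{9\alpha^2-10\alpha+1}\bigr)$, and substituting into $z=-1/(\alpha\tau^3\zeta)$ produces $z(\pi)=+\lambda$, matching the supremum since $z>0$ on $(\pi/2,\pi)$. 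Combining the two limits with monotonicity then yields $z(\theta)\in(-\lambda,\lambda)$ for all $\theta\in(0,\pi)$.

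The main obstacle is purely algebraic bookkeeping: verifying that the endpoint substitutions collapse exactly to the stated closed form for $\lambda$, and in particular tracking the sign of $\alpha\zeta$ at $\theta\to\pi^-$ (where $\cos\theta=-1$) so that $z$ flips from $-\lambda$ to $+\lambda$ rather than remaining negative. A secondary point worth confirming is the continuity of $z$ across $\theta=\pi/2$, where $\zeta\to\infty$ but $\tau^2\to 1$ forces $z\to 0$; this is what guarantees that the derivative computation carried out piecewise on $(0,\pi/2)$ and $(\pi/2,\pi)$ in Lemma \ref{monotone} indeed delivers monotonicity across the whole interval $(0,\pi)$, which is precisely what the range argument above requires.
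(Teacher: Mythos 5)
Your proposal is correct and follows essentially the same route as the paper: the paper's proof likewise invokes Lemma \ref{monotone} for monotonicity and then reduces the claim to the two one-sided limits $\lim_{\theta\to0^+}z(\theta)=-\lambda$ and $\lim_{\theta\to\pi^-}z(\theta)=\lambda$, which it declares ``easy to see.'' You have simply carried out the endpoint substitutions (including the sign flip of $\alpha\zeta$ at $\theta\to\pi^-$ and the continuity of $z$ across $\theta=\pi/2$) that the paper omits, and your algebra checks out.
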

\begin{proof}
By Lemma \ref{monotone}, $z$ is increasing on $(0,\pi)$, so it suffices to check $\displaystyle\lim_{\theta\to0^+}z(\theta)$ and $\displaystyle\lim_{\theta\to\pi^-}z(\theta)$. It is easy to see that $\displaystyle\lim_{\theta\to0^+}z(\theta)=-\lambda$ and $\displaystyle\lim_{\theta\to\pi^-}z(\theta)=\lambda$.
\end{proof}

%%%%%%%%%%%%%%%%%%%%%%%%%%%%%%%%%%%%%%%%%%%%%%%%%%%%%%%%%%%%%%%%%%%%%%%%%%%%%%%%%%%%%%%%%%%%%%%%%%%%

\section{HEURISTIC ARGUMENTS AND THE VERTICAL ASYMPTOTE}

In this section, we prove the sufficiency part of Theorem \ref{main1}, being careful to consider all the cases of the discontinuity of the $\zeta$ function at $\theta=\pi/2$. In the following lemma, we define a function which will be used to prove the hyperbolicy of $P_n(z)$.
\begin{lemma}\label{lemma}
For any $\theta\in(0,\pi)$, $P_n(z(\theta))=0$ if and only if $g_n(\theta)=0$, where 
\begin{equation}\label{g}
    g_n(\theta):=\frac{(\zeta-\cos\theta)\sin(n+1)\theta}{\sin\theta}-\cos(n+1)\theta+\frac{1}{\zeta^{n+1}}.
\end{equation}
\end{lemma}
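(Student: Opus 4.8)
The plan is to expand the generating function as a power series via partial fractions, read off a closed form for $P_n(z(\theta))$ in terms of the three roots of the denominator, and reduce that closed form to $g_n(\theta)$ by trigonometric manipulation. Since the excerpt has already verified that $t_1=\tau e^{-i\theta}$, $t_2=\tau e^{i\theta}$, and $t_3=\tau\zeta$ are the zeros of $D(t):=1+z(\theta)t+t^2+\alpha z(\theta)t^3$, and $D(0)=1$, I would first write $D(t)=\prod_{j=1}^3(1-t/t_j)$. For $\theta\neq\pi/2$ these roots are pairwise distinct: $t_1$ and $t_2$ coincide only when $\sin\theta=0$, while $t_3=\tau\zeta$ is real and $t_1,t_2$ are non-real because $|\zeta|>1$ by Lemma \ref{one}, so $\zeta\neq e^{\pm i\theta}$.

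Next I would decompose $1/D(t)=\sum_j c_j/(1-t/t_j)$ with $c_j=\prod_{k\neq j}(1-t_j/t_k)^{-1}$, expand each summand geometrically, and collect the coefficient of $t^n$ to get $P_n(z(\theta))=\sum_{j=1}^3 c_j t_j^{-n}$. Passing to the reciprocal roots $r_j:=1/t_j$, namely $r_1=\tau^{-1}e^{i\theta}$, $r_2=\tau^{-1}e^{-i\theta}$, $r_3=(\tau\zeta)^{-1}$, turns this into the symmetric form $P_n(z(\theta))=\sum_j r_j^{n+2}/\prod_{k\neq j}(r_j-r_k)$. Every term carries a common factor $\tau^{-n}\neq0$, which may be pulled out and discarded.

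I would then combine the conjugate terms $j=1,2$ over the common denominator $(e^{i\theta}-\zeta^{-1})(e^{-i\theta}-\zeta^{-1})=1-2\cos\theta/\zeta+\zeta^{-2}$; by Euler's formula the numerator collapses and, after the factors of $2i$ cancel, these two terms become $\frac{1}{\sin\theta}\cdot\frac{\sin(n+1)\theta-\zeta^{-1}\sin(n+2)\theta}{1-2\cos\theta/\zeta+\zeta^{-2}}$. The remaining term $j=3$ carries the same denominator, so after multiplying through by $\zeta^2$ the whole expression becomes a single fraction with denominator $\zeta^2-2\zeta\cos\theta+1=(\zeta-\cos\theta)^2+\sin^2\theta$, which is strictly positive on $(0,\pi)$ since $\sin\theta\neq0$. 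Its numerator is $\frac{\zeta^2\sin(n+1)\theta-\zeta\sin(n+2)\theta}{\sin\theta}+\zeta^{-n}$; substituting $\sin(n+2)\theta=\sin(n+1)\theta\cos\theta+\cos(n+1)\theta\sin\theta$ and dividing by the nonzero $\zeta$ turns this into exactly $\frac{(\zeta-\cos\theta)\sin(n+1)\theta}{\sin\theta}-\cos(n+1)\theta+\zeta^{-(n+1)}=g_n(\theta)$. Because each factor removed along the way ($\tau^{-n}$, $\zeta$, and $\zeta^2-2\zeta\cos\theta+1$) is nonzero, the equivalence $P_n(z(\theta))=0\iff g_n(\theta)=0$ follows.

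The main difficulty is bookkeeping rather than conceptual: correctly merging the complex-conjugate partial-fraction terms into real trigonometric quantities and checking at each stage that the factor being cleared is genuinely nonvanishing, so that the biconditional holds in both directions. The genuinely delicate point is $\theta=\pi/2$, where $\cos\theta=0$ makes the formula for $\zeta$ singular and the three roots degenerate; there I would pass to the limit ($\zeta\to\pm\infty$, $\tau\to1$, $z\to0$) or compute $P_n(0)$ directly, in line with the separate handling of this discontinuity promised for the sufficiency argument.
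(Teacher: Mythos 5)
Your proof is correct, and it takes essentially the same route as the paper: the paper's entire proof of this lemma is the one-line citation of equations (2.4)--(2.6) in \cite{Zumba}, and what you have written out --- the partial-fraction expansion $P_n=\sum_j c_j t_j^{-n}$, the merging of the conjugate terms over the common denominator, and the cancellation of the nonvanishing factors $\tau^{-n}$, $\zeta$, and $\zeta^2-2\zeta\cos\theta+1=(\zeta-\cos\theta)^2+\sin^2\theta$ --- is precisely the computation those cited equations encapsulate. Your flagged caveat at $\theta=\pi/2$ is likewise consistent with the paper, which excludes that point here and treats the discontinuity of $\zeta$ separately in the surrounding lemmas.
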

\begin{proof}
This follows directly from (2.4), (2.5), and (2.6) in \cite{Zumba}.
\end{proof}

By Lemma \ref{lemma}, $\theta$ is a zero of $P_n(z(\theta))$ if and only if $g_n(\theta)=0$.
Note that when $\cos(n+1)\theta=\pm1$, $\sin(n+1)\theta=0$.
By Lemma \ref{one}, we know that $|\zeta|>1$, so we see that when $\cos(n+1)\theta=1,\ g_n<0$, and when $\cos(n+1)\theta=-1,\ g_n>0$.
Hence, by the Intermediate Value Theorem, the function $g_n$ has at least one zero on each subinterval whose endpoints are the solutions of $\cos(n+1)\theta=\pm1$. 
However, $g_n$ has a vertical asymptote in the subinterval containing $\theta=\pi/2$ because $\zeta$ has a vertical asymptote at $\theta=\pi/2$.
\\
Note that on $(0,\pi)$, $\cos(n+1)\theta=\pm1$, which implies $\theta=\frac{k\pi}{n+1},\ k\in\{1,2,3,\dots,n\}$,
which means there are $n-1$ subintervals on $(0,\pi)$, excluding the endpoints at $0$ and $\pi$, and as stated above each subinterval besides the one containing the vertical asymptote has at least one zero of $g_n$. So we have found $n-2$ zeros of $g_n$ on $(0,\pi)$. An inductive argument can show that the degree of $P_n(z)$ is $n$, so we must find the missing zeros. Since there is a vertical asymptote at $\theta=\pi/2$, it is possible that the subinterval containing $\pi/2$ has multiple zeros or no zeros, with the latter situation meaning there are two missing zeros. The following lemmas settle these cases. We will need to consider the cases where $\alpha<0$ and $0<\alpha\leq1/9$ separately. Note that the interval $(0,\pi)$ is partitioned into $n+1$ subintervals with endpoints
\[
0,\frac{\pi}{n+1},\frac{2\pi}{n+1},\frac{3\pi}{n+1},\dots,\frac{n\pi}{n+1},\pi.
\]
\begin{lemma}\label{first}
For $\alpha<0$, the function $g_n$ defined in Equation (\ref{g}) has at least one zero on each of the intervals 
\[
\left(0,\frac{\pi}{n+1}\right)\ \text{ and }\ \left(\frac{n\pi}{n+1},\pi\right).
\]
\end{lemma}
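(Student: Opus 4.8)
The plan is to apply the Intermediate Value Theorem to $g_n$ on each of the two extreme subintervals, comparing the value of $g_n$ at the adjacent interior partition point $\pi/(n+1)$ (respectively $n\pi/(n+1)$), where Lemma \ref{one} applies directly, with the one-sided limit of $g_n$ at the endpoint $0$ (respectively $\pi$). The key structural observation is that for $\alpha<0$ the function $\zeta$, and hence $g_n$, extends continuously to the closed interval $[0,\pi]$: since $\cos 0=1$ and $\cos\pi=-1$ are nonzero, the denominator $4\alpha\cos\theta$ does not vanish at the endpoints, and $\Delta(0)=\Delta(\pi)=9\alpha^2-10\alpha+1>0$ by the computation in the proof of Lemma \ref{delta}. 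For $n\geq 2$ the asymptote at $\theta=\pi/2$ lies in neither of the closed subintervals $(0,\pi/(n+1)]$ and $[n\pi/(n+1),\pi)$, so $g_n$ is continuous there and the IVT is available.

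First I would record the boundary values of $\zeta$. Evaluating the formula of Definition \ref{functions} at the endpoints gives
\[
\zeta(0^+)=\frac{-5\alpha+1+\sqrt{9\alpha^2-10\alpha+1}}{4\alpha}=:\zeta_0,\qquad \zeta(\pi^-)=-\zeta_0 .
\]
For $\alpha<0$ the numerator is positive while $4\alpha<0$, so $\zeta_0<0$; multiplying the desired inequality $\zeta_0<-1$ by $4\alpha<0$ reduces it to $\sqrt{9\alpha^2-10\alpha+1}>\alpha-1$, which holds because the right-hand side is negative. Hence $\zeta_0<-1$ and $\zeta(\pi^-)=-\zeta_0>1$, consistent with $|\zeta|>1$ from Lemma \ref{one}.

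Next I would treat $(0,\pi/(n+1))$. Using $\lim_{\theta\to0^+}\frac{\sin(n+1)\theta}{\sin\theta}=n+1$, the limit of (\ref{g}) is
\[
\lim_{\theta\to0^+}g_n(\theta)=(n+1)(\zeta_0-1)-1+\zeta_0^{-(n+1)} .
\]
Since $\zeta_0-1<-2$ and $|\zeta_0^{-(n+1)}|<1$, this quantity is less than $-2(n+1)<0$. At the interior endpoint $\theta=\pi/(n+1)$ we have $\sin(n+1)\theta=0$ and $\cos(n+1)\theta=-1$, so $g_n(\pi/(n+1))=1+\zeta^{-(n+1)}>0$ by Lemma \ref{one}. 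The IVT then yields a zero in $(0,\pi/(n+1))$.

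Finally, on $(n\pi/(n+1),\pi)$ the same scheme applies but the signs carry a parity. Writing $\phi=\pi-\theta$ one finds $\lim_{\theta\to\pi^-}\frac{\sin(n+1)\theta}{\sin\theta}=(-1)^n(n+1)$ and $\cos(n+1)\theta\to(-1)^{n+1}$, so that
\[
\lim_{\theta\to\pi^-}g_n(\theta)=(-1)^n\big[(\zeta(\pi^-)+1)(n+1)+1\big]+\zeta(\pi^-)^{-(n+1)},
\]
which has sign $(-1)^n$ because the bracket exceeds $2(n+1)$ while $|\zeta(\pi^-)^{-(n+1)}|<1$. At the interior endpoint $\theta=n\pi/(n+1)$ we get $g_n(n\pi/(n+1))=(-1)^{n+1}+\zeta^{-(n+1)}$, of sign $(-1)^{n+1}$ by Lemma \ref{one}. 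The two signs are opposite for every $n$, so the IVT again produces a zero in $(n\pi/(n+1),\pi)$. I expect the main obstacle to be the careful evaluation of these one-sided limits --- in particular resolving the $0/0$ form of the first term of $g_n$ at both endpoints and correctly tracking the alternating $(-1)^n$ factors that arise at $\theta=\pi$ --- rather than the IVT bookkeeping, which is routine once the boundary signs are pinned down.
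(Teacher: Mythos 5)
Your proof is correct and takes essentially the same route as the paper's: the Intermediate Value Theorem on each extreme subinterval, with the sign at the inner partition point $\pi/(n+1)$ (resp.\ $n\pi/(n+1)$) read off from $-\cos(n+1)\theta$ together with $|\zeta|>1$, and one-sided limits of $g_n$ at $0$ and $\pi$; your $(-1)^n$ bookkeeping merely unifies the paper's separate even/odd cases, and your explicit computation of $\zeta(0^+)=\zeta_0<-1$ and $\zeta(\pi^-)=-\zeta_0>1$ actually supplies a sign fact the paper asserts without justification ($\lim_{\theta\to0^+}(\zeta-\cos\theta)<0$). One small point in your favor: your restriction to $n\geq2$ is needed, since for $n=1$ the inner endpoint $\pi/(n+1)=\pi/2$ is the asymptote of $\zeta$ and the first term of $g_1$ no longer vanishes there (indeed $g_1\to1/\alpha<0$), an edge case the paper's proof glosses over.
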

\begin{proof}
Recall that 
\[
g_n=\frac{(\zeta-\cos\theta)\sin(n+1)\theta}{\sin\theta}-\cos(n+1)\theta+\frac{1}{\zeta^{n+1}},
\]
and the term $-\cos(n+1)\theta$ determines the sign of $g_n(\theta)$ at each interval endpoint. \\
For $\displaystyle\theta=\frac{\pi}{n+1}$, we have 
\[
-\cos(n+1)\theta=-\cos\pi=-(-1)=1,
\]
which implies that $g_n>0$ at $\displaystyle\frac{\pi}{n+1}\ \forall n\in\mathbb{N}$. \\
Note that 
\[
\lim_{\theta\to0^+}(\zeta-\cos\theta)<0,
\]
and 
\[
\lim_{\theta\to0^+}\frac{\sin(n+1)\theta}{\sin\theta}=n+1.
\]
Furthermore, since $\cos(0)=1$ and $\displaystyle\left|\frac{1}{\zeta^{n+1}}\right|<1$, we have
\[
\lim_{\theta\to0^+}g_n<0\ \forall n\in\mathbb{N}.
\]
Since $g_n$ is continuous on $\displaystyle\left(0,\frac{\pi}{n+1}\right)$ and it changes sign from $0$ to $\displaystyle\frac{\pi}{n+1}$, we have by the Intermediate Value Theorem that $g_n$ has at least one zero on $\displaystyle\left(0,\frac{\pi}{n+1}\right)$. \\
For the interval $\displaystyle\left(\frac{n\pi}{n+1},\pi\right)$, we must consider the two cases when $n$ is even and when $n$ is odd. \\
First, suppose $n$ is even. So $n=2\ell$ for some $\ell\in\mathbb{N}$. \\
For $\displaystyle\theta=\frac{n\pi}{n+1}$, we have 
\[
-\cos(n+1)\theta=-\cos(2\ell\pi)=-1,
\]
which implies $g_n<0$ at $\displaystyle\frac{n\pi}{n+1}\  \forall n\in2\mathbb{N}$. \\
Note that 
\[
\lim_{\theta\to\pi^-}(\zeta-\cos\theta)>0,
\]
and
\[
\lim_{\theta\to\pi^-}\frac{\sin(n+1)\theta}{\sin\theta}=\lim_{\theta\to\pi^-}\frac{\sin(2\ell\theta+\theta)}{\sin\theta}=\lim_{\theta\to\pi^-}\frac{(2\ell+1)\cos(2\ell\theta+\theta)}{\cos\theta}=2\ell+1.
\]
Furthermore, since 
\[
-\cos(2\ell+1)\pi=-\cos(2\ell\pi+\pi)=-(-1)=1,
\]
and 
\[
\left|\frac{1}{\zeta^{n+1}}\right|<1,
\]
we ascertain that
\[
\lim_{\theta\to\pi^-}g_n>0\ \forall n\in2\mathbb{N}.
\]
Since $g_n$ is continuous on $\displaystyle\left(\frac{n\pi}{n+1},\pi\right)$ and it changes sign from $\displaystyle\frac{n\pi}{n+1}$ to $\pi$, the Intermediate Value Theorem implies that $g_n$ has at least one zero on $\displaystyle\left(\frac{n\pi}{n+1},\pi\right)$ if $n$ is even. \\
Now suppose that $n$ is odd and write $n=2\ell+1$ for some $\ell\in\mathbb{N}$. \\
For $\displaystyle\theta=\frac{n\pi}{n+1}$, we have 
\[
-\cos(n+1)\theta=-\cos(2\ell\pi+\pi)=-(-1)=1,
\]
which implies that $g_n>0$ at $\displaystyle\frac{n\pi}{n+1}=\frac{(2\ell +1)\pi}{2\ell+2}\ \forall\ell\in\mathbb{N}$. \\
As before, 
\[
\lim_{\theta\to\pi^-}(\zeta-\cos\theta)>0,
\]
but 
\begin{align*}
\lim_{\theta\to\pi^-}\frac{\sin(n+1)\theta}{\sin\theta}&=\lim_{\theta\to\pi^-}\frac{\sin(2\ell\theta+2\theta)}{\sin\theta} \\
&=\lim_{\theta\to\pi^-}\frac{(2\ell+2)\cos(2\ell\theta+2\theta)}{\cos\theta} \\
&=-(2\ell+2).
\end{align*}
Furthermore, since 
\[
-\cos(2\ell+2)\pi=-1,
\]
and 
\[
\left|\frac{1}{\zeta^{n+1}}\right|<1,
\]
we have 
\[
\lim_{\theta\to\pi^-}g_n<0\ \forall\ell\in\mathbb{N}.
\]
Since $g_n$ is continuous on $\displaystyle\left(\frac{n\pi}{n+1},\pi\right)$ and it changes sign from $\displaystyle\frac{n\pi}{n+1}$ to $\pi$, the Intermediate Value Theorem implies that $g_n$ has at least one zero on $\displaystyle\left(\frac{n\pi}{n+1},\pi\right)$ for $n$ odd.
\end{proof}
In the next lemma, we will use the fact that for $0<\alpha\leq1/9$,
\begin{equation}\label{limit1}
    \lim_{\theta\to\frac{\pi}{2}^-}(\zeta-\cos\theta)=+\infty,
\end{equation}
\begin{equation}\label{limit2}
    \lim_{\theta\to\frac{\pi}{2}^+}(\zeta-\cos\theta)=-\infty,
\end{equation}
and the terms $-\cos(n+1)\theta$ and $\displaystyle\frac{1}{\zeta^{n+1}}$ are finite at $\theta=\pi/2$.
Note that if $n$ is even, then the vertical asymptote at $\displaystyle\theta=\frac{\pi}{2}$ lies in the center of the central subinterval
\[
\left(\frac{n\pi}{2n+2},\frac{(n+2)\pi}{2n+2}\right).
\]
\begin{lemma}\label{second}
For $0<\alpha\leq1/9$ and $n$ even, $g_n$ has at least one zero on the intervals
\[
\left(\frac{n\pi}{2n+2},\frac{\pi}{2}\right)\ \text{ and }\ \left(\frac{\pi}{2},\frac{(n+2)\pi}{2n+2}\right).
\]
\end{lemma}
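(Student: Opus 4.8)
The plan is to apply the Intermediate Value Theorem to $g_n$ separately on each of the two half-intervals, using the two finite endpoints $\frac{n\pi}{2n+2}$ and $\frac{(n+2)\pi}{2n+2}$ together with the one-sided behavior at the asymptote $\theta=\pi/2$. Since $n$ is even, I would write $n=2\ell$, so that $n+1=2\ell+1$ is odd. At the left endpoint $\theta=\frac{n\pi}{2n+2}$ we have $(n+1)\theta=\ell\pi$, and at the right endpoint $\theta=\frac{(n+2)\pi}{2n+2}$ we have $(n+1)\theta=(\ell+1)\pi$; in both cases $\sin(n+1)\theta=0$, so the singular first term of $g_n$ in (\ref{g}) vanishes there and the value reduces to $-\cos(n+1)\theta+\zeta^{-(n+1)}$.

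Because $|\zeta|>1$ by Lemma \ref{one}, we have $|\zeta^{-(n+1)}|<1$, so the sign of $g_n$ at each finite endpoint is governed entirely by $-\cos(n+1)\theta$. At the left endpoint this is $-\cos(\ell\pi)=-(-1)^\ell$, and at the right endpoint it is $-\cos((\ell+1)\pi)=(-1)^\ell$. Next I would determine the behavior of $g_n$ as $\theta\to\frac{\pi}{2}^\pm$. The crucial point here (and the reason the hypothesis requires $n$ even) is that $\sin(n+1)\frac{\pi}{2}=\sin\!\left((2\ell+1)\frac{\pi}{2}\right)=(-1)^\ell\neq0$, so the quotient $\frac{\sin(n+1)\theta}{\sin\theta}$ tends to the nonzero finite limit $(-1)^\ell$. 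Multiplying by (\ref{limit1}) and (\ref{limit2}), and recalling that $-\cos(n+1)\theta$ and $\zeta^{-(n+1)}$ stay bounded, I conclude that $g_n\to+\infty$ as $\theta\to\frac{\pi}{2}^-$ when $\ell$ is even and $g_n\to-\infty$ when $\ell$ is odd, so $g_n$ takes the sign of $(-1)^\ell$ just to the left of $\pi/2$; symmetrically, $g_n$ takes the sign of $-(-1)^\ell$ just to the right of $\pi/2$.

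Comparing these with the endpoint signs, in every case the sign of $g_n$ at the finite endpoint is opposite to its sign just inside the asymptote, independent of the parity of $\ell$: on the left half-interval the endpoint sign $-(-1)^\ell$ is opposite to the near-$\pi/2^-$ sign $(-1)^\ell$, and on the right half-interval the endpoint sign $(-1)^\ell$ is opposite to the near-$\pi/2^+$ sign $-(-1)^\ell$. Since $g_n$ is continuous on each open half-interval, the Intermediate Value Theorem then yields at least one zero on $\left(\frac{n\pi}{2n+2},\frac{\pi}{2}\right)$ and at least one on $\left(\frac{\pi}{2},\frac{(n+2)\pi}{2n+2}\right)$.

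The main thing to be careful about is the bookkeeping of the factor $(-1)^\ell$: one must verify that the $\mp(-1)^\ell$ arising from $-\cos(n+1)\theta$ at the two endpoints is genuinely opposite in sign to the $(-1)^\ell$ produced by $\sin(n+1)\frac{\pi}{2}$ multiplied by the sign of the blow-up in (\ref{limit1})--(\ref{limit2}). This uniform sign flip across both parities of $\ell$ is exactly what makes the argument go through without splitting into further cases, and it is also where the evenness of $n$ is essential, since for odd $n$ one would have $\sin(n+1)\frac{\pi}{2}=0$ and the first term would not force a blow-up of fixed sign.
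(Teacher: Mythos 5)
Your proof is correct and takes essentially the same route as the paper's: you determine the sign of $g_n$ at the finite endpoints from $-\cos(n+1)\theta$ (the singular term vanishes there since $\sin(n+1)\theta=0$, and $|\zeta^{-(n+1)}|<1$ by Lemma \ref{one}), you get opposite-signed blow-ups as $\theta\to\frac{\pi}{2}^{\pm}$ from (\ref{limit1})--(\ref{limit2}) because $\sin(n+1)\frac{\pi}{2}=(-1)^{\ell}\neq0$ for even $n$, and you apply the Intermediate Value Theorem on each half-interval. The only difference is cosmetic: your uniform $(-1)^{\ell}$ bookkeeping with $n=2\ell$ handles in one stroke the two subcases $n=4\ell$ and $n=4\ell-2$ that the paper treats separately.
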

\begin{proof}
We must consider the case where $n$ is a multiple of $4$ and when $n$ is not a multiple of $4$ separately. \\
First, suppose $n$ is a multiple of $4$. Then $n=4\ell$ for some $\ell\in\mathbb{N}$. So
\begin{equation}\label{limit3}
\lim_{\theta\to\frac{\pi}{2}^\pm}\frac{\sin(n+1)\theta}{\sin\theta}=\sin(2\ell\theta+\pi/2)=1.
\end{equation}
Hence Equations (\ref{limit1}) and (\ref{limit2}) together with Equation (\ref{limit3}) imply that
\begin{equation}\label{limit4}
    \lim_{\theta\to\frac{\pi}{2}^-}g_n(\theta)=+\infty
\end{equation}
and 
\begin{equation}\label{limit5}
    \lim_{\theta\to\frac{\pi}{2}^+}g_n(\theta)=-\infty.
\end{equation}
Next we determine the sign of $g_n$ at the endpoints $\displaystyle\frac{n\pi}{2n+2}$ and $\displaystyle\frac{(n+2)\pi}{2n+2}$ by finding the sign of $-\cos(n+1)\theta$. Observe that for $n=4\ell$ and $\displaystyle\theta=\frac{n\pi}{2n+2}$,
\begin{equation}\label{limit6}
    -\cos(n+1)\theta=-\cos(2\ell\theta)=-1.
\end{equation}
For $n=4\ell$ and $\displaystyle\theta=\frac{(n+2)\pi}{2n+2}$,
\begin{equation}\label{limit7}
    -\cos(n+1)\theta=-\cos(2\ell\pi+\pi)=1.
\end{equation}
Equations (\ref{limit4}) and (\ref{limit6}) imply that $g_n$ changes sign on the interval $\displaystyle\left(\frac{n\pi}{2n+2},\frac{\pi}{2}\right)$, and Equations (\ref{limit5}) and (\ref{limit7}) imply that $g_n$ changes sign on the interval $\displaystyle\left(\frac{\pi}{2},\frac{(n+2)\pi}{2n+2}\right)$. Since $g_n$ is continuous on those intervals, we have by the Intermediate Value Theorem that $g_n$ has at least one zero on each of those intervals for $n=4\ell$. \\
Now suppose $n$ is not a multiple of $4$. So $n=4\ell-2$ for some $\ell\in\mathbb{N}$. So
\begin{equation}\label{limit8}
    \lim_{\theta\to\frac{\pi}{2}^\pm}\frac{\sin(n+1)\theta}{\sin\theta}=\sin(2\ell\pi-\pi/2)=-1.
\end{equation}
Hence Equations (\ref{limit1}) and (\ref{limit2}) together with Equation (\ref{limit8}) imply that 
\begin{equation}\label{limit9}
    \lim_{\theta\to\frac{\pi}{2}^-}g_n(\theta)=-\infty
\end{equation}
and 
\begin{equation}\label{limit10}
    \lim_{\theta\to\frac{\pi}{2}^+}g_n(\theta)=+\infty.
\end{equation}
Again, we determine the sign of $g_n$ at the endpoints by finding the sign of $-\cos(n+1)\theta$. Observe that for $n=4\ell-2$ and $\displaystyle\theta=\frac{n\pi}{2n+2}$, 
\begin{equation}\label{limit11}
    -\cos(n+1)\theta)=-\cos(2\ell\pi-\pi)=1.
\end{equation}
For $n=4\ell-2$ and $\displaystyle\theta=\frac{(n+2)\pi)}{2n+2}$,
\begin{equation}\label{limit12}
    -\cos(n+1)\theta=-\cos(2\ell\pi)=-1.
\end{equation}
Equations (\ref{limit9}) and (\ref{limit11}) imply that $g_n$ changes sign on the interval $\displaystyle\left(\frac{n\pi}{2n+2},\frac{\pi}{2}\right)$, and Equations (\ref{limit10}) and (\ref{limit12}) imply that $g_n$ changes sign on the interval $\displaystyle\left(\frac{\pi}{2},\frac{(n+2)\pi}{2n+2}\right)$. Since $g_n$ is continuous on those intervals, we again have by the Intermediate Value Theorem that $g_n$ has at least one zero on each of those intervals for $n=4\ell-2$.
\end{proof}
Note that if $n$ is odd, then the possible discontinuity at $\displaystyle\theta=\frac{\pi}{2}$ occurs directly on the central subinterval endpoint.
\begin{lemma}\label{third}
For $0<\alpha\leq1/9$ and $n$ odd, $g_n$ has at least one zero on the intervals
\[
\left(\frac{(n-1)\pi}{2n+2},\frac{\pi}{2}\right)\ \text{ and }\ \left(\frac{\pi}{2},\frac{(n+3)\pi}{2n+2}\right).
\]
\end{lemma}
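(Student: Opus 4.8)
The plan is to mirror the proof of Lemma \ref{second}, again locating a sign change of $g_n$ on each of the two subintervals flanking $\theta=\pi/2$ and invoking the Intermediate Value Theorem. The essential difference is structural: for $n$ odd the point $\theta=\pi/2=\frac{(n+1)\pi}{2(n+1)}$ is itself a partition endpoint shared by the two subintervals $\left(\frac{(n-1)\pi}{2n+2},\frac{\pi}{2}\right)$ and $\left(\frac{\pi}{2},\frac{(n+3)\pi}{2n+2}\right)$, rather than an interior point of a single central subinterval. Consequently, since $n+1$ is even we have $\sin((n+1)\theta)\to0$ as $\theta\to\pi/2$, so the singular term $\frac{(\zeta-\cos\theta)\sin((n+1)\theta)}{\sin\theta}$ is no longer a genuine blow-up (as in the $n$ even case) but an indeterminate form of type $\infty\cdot0$; resolving it to a finite one-sided limit is the crux of the argument.

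First I would compute $\lim_{\theta\to\pi/2^\pm}g_n(\theta)$. From Definition \ref{functions} and $\Delta(\pi/2)=(\alpha-1)^2$ (so $\sqrt{\Delta(\pi/2)}=1-\alpha$ because $\alpha\leq1/9<1$), the numerator of $\zeta$ tends to $2(1-\alpha)>0$ while its denominator $4\alpha\cos\theta\to0$, giving the leading behavior $\zeta\sim\frac{1-\alpha}{2\alpha\cos\theta}$, consistent with (\ref{limit1}) and (\ref{limit2}). Writing $p=(n+1)/2$ and applying L'H\^{o}pital to $\frac{\sin((n+1)\theta)}{\cos\theta}$, whose limit at $\pi/2$ is $-2p(-1)^p$, the indeterminate term converges from both sides to $-\frac{p(1-\alpha)}{\alpha}(-1)^p$. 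Since $\cos((n+1)\theta)\to(-1)^p$ and $\frac{1}{\zeta^{n+1}}\to0$, I obtain the finite two-sided limit
\[
L:=\lim_{\theta\to\pi/2}g_n(\theta)=-(-1)^p\left(\frac{p(1-\alpha)}{\alpha}+1\right),
\]
which, because $0<\alpha\leq1/9$ forces $\frac{p(1-\alpha)}{\alpha}+1>0$, is nonzero with sign $(-1)^{p+1}$.

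Next I would record the sign of $g_n$ at the two outer endpoints. At any partition point $\frac{k\pi}{n+1}$ we have $\sin((n+1)\theta)=0$, so $g_n=-\cos(k\pi)+\frac{1}{\zeta^{n+1}}$ has sign $(-1)^{k+1}$ because $\left|\frac{1}{\zeta^{n+1}}\right|<1$ by Lemma \ref{one}. Writing $n=2m+1$, the left endpoint $\frac{(n-1)\pi}{2n+2}=\frac{m\pi}{n+1}$ gives sign $(-1)^{m+1}$ and the right endpoint $\frac{(n+3)\pi}{2n+2}=\frac{(m+2)\pi}{n+1}$ also gives sign $(-1)^{m+1}$, whereas $\operatorname{sign}L=(-1)^{p+1}=(-1)^{m}$ since $p=m+1$. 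Thus on each subinterval the value at the outer endpoint and the one-sided limit at $\pi/2$ carry opposite signs, so by continuity of $g_n$ on the open subintervals and the Intermediate Value Theorem each subinterval contains at least one zero of $g_n$.

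The main obstacle is the limit computation in the second step: unlike the $n$ even case, where $\frac{\sin((n+1)\theta)}{\sin\theta}\to\pm1$ and the sign of $g_n$ near $\pi/2$ is read off directly from $\zeta\to\pm\infty$, here one must extract the exact finite value of the $\infty\cdot0$ form and, crucially, verify that the two one-sided limits coincide and that $L$ has a definite nonzero sign. This is precisely where the hypothesis $0<\alpha\leq1/9$ enters (guaranteeing $\sqrt{\Delta(\pi/2)}=1-\alpha$ and $\frac{p(1-\alpha)}{\alpha}+1>0$), and it is what renders the removable-type singularity harmless for the sign-change argument.
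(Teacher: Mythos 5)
Your proof is correct and follows essentially the same route as the paper's: a finite (removable) limit of $g_n$ at $\theta=\pi/2$ of definite sign, endpoint signs read off from $-\cos(n+1)\theta$ together with $\left|\zeta^{-(n+1)}\right|<1$ from Lemma \ref{one}, and the Intermediate Value Theorem on each flanking subinterval; your exact two-sided limit $L=-(-1)^p\bigl(\tfrac{p(1-\alpha)}{\alpha}+1\bigr)$ merely unifies the paper's two cases $n=4\ell-1$ and $n=4\ell+1$, where it appears as $\tfrac{1-\alpha}{\alpha}(-2\ell)$ and $\tfrac{1-\alpha}{\alpha}(2\ell+1)$ in Equations (\ref{limit13}) and (\ref{limit19}). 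One shared caveat: both arguments implicitly require $n\geq3$ (the paper via $\ell\in\mathbb{N}$, you via the endpoint-sign formula, which breaks down when $m=0$ since the left endpoint is then $\theta=0$, where $\sin\theta=0$ and $\tfrac{\sin((n+1)\theta)}{\sin\theta}\to n+1\neq0$, and indeed for $n=1$ the conclusion itself fails because $P_1(z)=-z$ vanishes only at $z=0$, corresponding to $\theta=\pi/2$).
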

\begin{proof}
Let $n$ be an odd natural number. We must consider the case where $n=4\ell-1$ and $n=4\ell+1,\ \ell\in\mathbb{N}$, separately. \\
First, suppose $n=4\ell-1$. Then since $0<\alpha\leq1/9$, 
\begin{equation}\label{limit13}
   \lim_{\theta\to\frac{\pi}{2}}\frac{(\zeta-\cos\theta)\sin(n+1)\theta}{\sin\theta}=\frac{1-\alpha}{\alpha}(-2\ell).
\end{equation}
Since $|\cos(n+1)\theta|\leq1$ and $\displaystyle\left|\frac{1}{\zeta}\right|<1$, Equation (\ref{limit13}) implies that 
\begin{equation}\label{limit15}
    \lim_{\theta\to\frac{\pi}{2}}g_n(\theta)<0.
\end{equation}
Next we determine the sign of $g_n$ at the endpoints $\displaystyle\frac{(n-1)\pi}{2n+2}$ and $\displaystyle\frac{(n+3)\pi}{2n+2}$ by finding the sign of $-\cos(n+1)\theta$. Observe that for $n=4\ell-1$ and $\displaystyle\theta=\frac{(n-1)\pi}{2n+2}$,
\begin{equation}\label{limit17}
    -\cos(n+1)\theta=-\cos(2\ell\pi-\pi)=1.
\end{equation}
For $n=4\ell-1$ and $\displaystyle\theta=\frac{(n+3)\pi}{2n+2}$,
\begin{equation}\label{limit18}
    -\cos(n+1)\theta=-\cos(2\ell\pi+\pi)=1.
\end{equation}
Equations (\ref{limit15}) and (\ref{limit17}) imply that $g_n$ changes sign on the interval $\displaystyle\left(\frac{(n-1)\pi}{2n+2},\frac{\pi}{2}\right)$, and Equations (\ref{limit15}) and (\ref{limit18}) imply that $g_n$ changes sign on the interval $\displaystyle\left(\frac{\pi}{2},\frac{(n+3)\pi}{2n+2}\right)$. Since $g_n$ is continuous on those intervals, we have by the Intermediate Value Theorem that $g_n$ has at least one zero on each of those intervals for $n=4\ell-1$. \\
Now suppose $n=4\ell+1$ for some $\ell\in\mathbb{N}$. Then since $0<\alpha\leq1/9$,
\begin{equation}\label{limit19}
    \lim_{\theta\to\frac{\pi}{2}}\frac{(\zeta-\cos\theta)\sin(n+1)\theta}{\sin\theta}=\frac{1-\alpha}{\alpha}(2\ell+1).
\end{equation}
Since $|\cos(n+1)\theta|\leq1$ and $\displaystyle\left|\frac{1}{\zeta}\right|<1$, Equation (\ref{limit19}) implies that
\begin{equation}\label{limit21}
    \lim_{\theta\to\frac{\pi}{2}}g_n(\theta)>0.
\end{equation}
We again determine the sign of $g_n$ at the endpoints $\displaystyle\frac{(n-1)\pi}{2n+2}$ and $\displaystyle\frac{(n+3)\pi}{2n+2}$ by finding the sign of $-\cos(n+1)\theta$. Observe that for $n=4\ell+1$ and $\displaystyle\theta=\frac{(n-1)\pi}{2n+2}$,
\begin{equation}\label{limit23}
    -\cos(n+1)\theta=-\cos(2\ell\pi)=-1.
\end{equation}
For $n=4\ell+1$ and $\displaystyle\theta=\frac{(n+3)\pi}{2n+2}$, 
\begin{equation}\label{limit24}
    -\cos(n+1)\theta=-\cos(2\ell\pi+2\pi)=-1.
\end{equation}
Equations (\ref{limit21}) and (\ref{limit23}) imply that $g_n$ changes sign on the interval $\displaystyle\left(\frac{(n-1)\pi}{2n+2},\frac{\pi}{2}\right)$, and Equations (\ref{limit21}) and (\ref{limit24}) imply that $g_n$ changes sign on the interval $\displaystyle\left(\frac{\pi}{2},\frac{(n+3)\pi}{2n+2}\right)$. Since $g_n$ is continuous on those intervals, we again have by the Intermediate Value Theorem that $g_n$ has at least one zero on each of those intervals for $n=4\ell+1$.
\end{proof}
By Lemmas \ref{first}, \ref{second}, and \ref{third}, we see that $g_n$ has a total of $n$ zeros on the interval $(0,\pi)$ when $n$ is even, and a total of $n-1$ zeros on the interval $(0,\pi)$ when $n$ is odd. Since the degree of $P_n(z)$ is $n$ for all $n\in\mathbb{N}$, the Fundamental Theorem of Algebra tells us that there must be exactly one more zero when $n$ is odd. Since complex zeros always occur in conjugate pairs, the single missing zero must be real.
\begin{lemma}\label{odd}
If $n$ is odd, then $z=0$ is a zero of $P_n(z)$.
\end{lemma}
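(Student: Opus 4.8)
The plan is to read the values $P_n(0)$ directly off the generating function by specializing $z=0$. Although the hyperbolicity argument above required the intricate machinery of $\zeta$, $\tau$, and $g_n$, the single remaining zero is pinned down by a completely elementary computation, so I expect no genuine obstacle here; the only thing to be careful about is to work with the normalized recurrence of Theorem \ref{main3}.

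First I would set $z=0$ in the generating function of Theorem \ref{main3}. This annihilates both the $zt$ and the $\alpha z t^3$ terms, collapsing the denominator to $1+t^2$, so that
\[
\sum_{n=0}^\infty P_n(0)\,t^n=\frac{1}{1+t^2}=\sum_{k=0}^\infty(-1)^k t^{2k}.
\]
Comparing coefficients of $t^n$ on the two sides, every odd power of $t$ on the right has coefficient $0$, whence $P_n(0)=0$ for all odd $n$ (while $P_n(0)=(-1)^{n/2}$ for even $n$). In particular $z=0$ is a zero of $P_n(z)$ whenever $n$ is odd, which is exactly the missing real zero identified in the discussion preceding the lemma.

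As a cross-check I would also derive the same conclusion inductively from the recurrence. Evaluating $P_n(z)+zP_{n-1}(z)+P_{n-2}(z)+\alpha z P_{n-3}(z)=0$ at $z=0$ yields $P_n(0)=-P_{n-2}(0)$, and combining this with the initial data $P_0(0)=1$ and $P_{-1}(0)=0$ forces $P_n(0)=0$ for every odd $n$ by induction on $n$. The only subtlety, as noted, is that one must use the normalized recurrence in which the coefficient of $P_{n-2}$ is $1$; beyond that, no estimate or case analysis is required, and the lemma follows immediately.
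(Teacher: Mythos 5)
Your proof is correct and follows essentially the same route as the paper: specialize $z=0$ in the generating function so the denominator collapses to $1+t^2$, expand as $\sum_{k\ge0}(-1)^k t^{2k}$, and compare coefficients to get $P_n(0)=0$ for odd $n$. Your additional inductive cross-check via $P_n(0)=-P_{n-2}(0)$ is a harmless redundancy and equally valid; the paper simply stops after the coefficient comparison.
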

\begin{proof}
Recall that the generating relation for $\{P_n(z)\}_{n=0}^\infty$ is 
\[
\sum_{n=0}^\infty P_n(z)t^n=\frac{1}{1+zt+t^2+\alpha zt^3}.
\]
Then the generating relation for $z=0$ becomes
\[
\sum_{n=0}^\infty P_n(0)t^n=\frac{1}{1+t^2}.
\]
Expanding both sides gives
\[
P_0(0)+P_1(0)t+P_2(0)t^2+P_3(0)t^3+P_4(0)t^4+\dots=1-t^2+t^4-t^6+t^8-\dots
\]
Equating coefficients yields
\[
P_n(0)=
\begin{cases}
&0\ \ \ \text{ if $n$ is odd} \\
&\pm1\ \text{ if $n$ is even}.
\end{cases}
\]
Hence $P_n(0)=0$ for all odd $n$.
\end{proof}
By Lemma \ref{lemma}, we conclude that since $g_n$ has $n$ zeros on $(0,\pi)$ for all $n\in\mathbb{N}$, $P_n(z)$ has $n$ zeros on the interval $(-\lambda,\lambda)$ via the monotone map $z(\theta):(0,\pi)\to(-\lambda,\lambda)$. Since the degree of $P_n(z)$ is $n$ for each $n$, it follows by the Fundamental Theorem of Algebra that all of the zeros of $P_n(z)$ are real and lie on the interval $(-\lambda,\lambda)$.
\begin{lemma}
The set of zeros of $P_n(z)$ is dense on the open interval $ (-\lambda,\lambda)$
as $n\to\infty$. 
\end{lemma}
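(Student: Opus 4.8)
The plan is to push the problem back through the substitution $z=z(\theta)$ and reduce density of the zeros of $P_n$ to density of the zeros of $g_n$, where the key leverage is that the natural partition of $(0,\pi)$ on which $g_n$ changes sign has mesh tending to $0$. To see that the transfer is legitimate, recall that by Lemma \ref{monotone} and the Corollary following it, $z(\theta)$ is continuous and strictly increasing on $(0,\pi)$ with $\lim_{\theta\to0^+}z=-\lambda$ and $\lim_{\theta\to\pi^-}z=\lambda$; hence $z$ is a homeomorphism of $(0,\pi)$ onto $(-\lambda,\lambda)$, and in particular $z^{-1}$ is continuous. By Lemma \ref{lemma}, for $\theta\in(0,\pi)$ we have $z(\theta)\in\mathcal{Z}(P_n)$ exactly when $g_n(\theta)=0$. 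Therefore $\bigcup_{n}\mathcal{Z}(P_n)$ is dense in $(-\lambda,\lambda)$ if and only if $\bigcup_{n}\mathcal{Z}(g_n)$ is dense in $(0,\pi)$, so it suffices to prove the latter.

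Next I would show that, for each $n$, every one of the subintervals cut out by the partition points $\theta_k=\frac{k\pi}{n+1}$, $k=0,1,\dots,n+1$, contains a zero of $g_n$. At an interior node $\theta_k$ we have $\sin(n+1)\theta_k=0$ and $\cos(n+1)\theta_k=(-1)^k$, so (\ref{g}) collapses to
\[
g_n(\theta_k)=(-1)^{k+1}+\frac{1}{\zeta(\theta_k)^{\,n+1}}.
\]
Because $\zeta$ is real with $|\zeta|>1$ by Lemma \ref{one}, the second term has absolute value strictly less than $1$, so $\operatorname{sgn}g_n(\theta_k)=(-1)^{k+1}$. These signs alternate in $k$, so on every subinterval $(\theta_k,\theta_{k+1})$ not meeting the asymptote of $\zeta$ at $\theta=\pi/2$ the function $g_n$ is continuous and changes sign, whence the Intermediate Value Theorem supplies a zero; the one or two subintervals adjacent to $\pi/2$ are exactly those already treated in Lemmas \ref{second} and \ref{third}, which likewise produce a zero there. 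Crucially, the mesh of this partition is $\frac{\pi}{n+1}\to0$.

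Finally I would deduce pointwise density. Fix $x_0\in(-\lambda,\lambda)$ and $\eps>0$, and let $(c,d):=z^{-1}\big((x_0-\eps,x_0+\eps)\cap(-\lambda,\lambda)\big)$, which is an open subinterval of $(0,\pi)$ by continuity of $z^{-1}$. Once $n$ is large enough that $\frac{\pi}{n+1}<\frac{d-c}{2}$, some closed partition cell $[\theta_k,\theta_{k+1}]$ lies inside $(c,d)$, and by the previous paragraph $g_n$ has a zero $\theta^*\in(\theta_k,\theta_{k+1})\subset(c,d)$. Then $z(\theta^*)\in\mathcal{Z}(P_n)$ and $z(\theta^*)\in(x_0-\eps,x_0+\eps)$, so every neighborhood of $x_0$ meets $\bigcup_n\mathcal{Z}(P_n)$, which is precisely the asserted density.

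The step I expect to be the main obstacle is controlling the vertical asymptote of $\zeta$ at $\theta=\pi/2$, where the clean alternating-sign computation above breaks down. I would dispose of it by observing that it affects only the single subinterval (for $n$ even) or single pair of subintervals (for $n$ odd) straddling $\pi/2$, whose zeros are already guaranteed by Lemmas \ref{second} and \ref{third}; alternatively, since $z(\pi/2)=0$ and Lemma \ref{odd} shows $0\in\mathcal{Z}(P_n)$ for every odd $n$, the center of the interval is covered directly and the asymptotic cells may be discarded altogether in the density argument, the generic cells away from $\pi/2$ already accumulating at every point of $(0,\pi)$.
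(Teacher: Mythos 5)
Your proposal is correct and takes essentially the same approach as the paper: the paper's (much terser) proof likewise observes that the nodes $\theta=\frac{k\pi}{n+1}$ become dense in $(0,\pi)$, that $g_n$ vanishes in the resulting subintervals, and that the continuous bijection $z:(0,\pi)\to(-\lambda,\lambda)$ transfers this density to $\bigcup_{n}\mathcal{Z}(P_n)$. You have simply made explicit the quantitative mesh argument and the treatment of the cells adjacent to the asymptote at $\theta=\pi/2$, which the paper leaves implicit by citing its earlier lemmas.
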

\begin{proof}
Since the solutions of $\cos(n+1)\theta=\pm1$ are dense on $(0,\pi)$ as $n\to\infty$, it is clear that $\displaystyle\bigcup_{n=0}^\infty\mathcal{Z}(g_n)$
is dense on $(0,\pi)$. Since $z:(0,\pi)\to(-\lambda,\lambda)$ is a continuous bijective function, it follows that $\displaystyle\bigcup_{n=0}^\infty\mathcal{Z}(P_n)$ 
is dense on $(-\lambda,\lambda)$.
\end{proof}
This concludes the sufficiency part of the proof, where $\alpha\leq1/9$ implies that $P_n(z)$ is hyperbolic for all $n$. In the next section, we focus on the necessary part of the proof, where $\alpha>1/9$ or $b<0$ implies that $P_n(z)$ is not hyperbolic for all large $n$.
%%%%%%%%%%%%%%%%%%%%%%%%%%%%%%%%%%%%%%%%%%%%%%%%%%%%%%%%%%%%%%%%

\begin{center}
\section{NECESSARY CONDITION FOR THE REALITY OF ZEROS}
\end{center}
To prove that $P_n(z)$ is not hyperbolic for large $n$ when $b<0$ or $\alpha>1/9$, we will employ the Implicit Function Theorem and a theorem by Sokal in \cite{Sokal}, but we must first define a few terms to make sense of the statement of Sokal's theorem. 
\begin{definition}\label{akbk}
Let 
\[
f_n(z)=\sum_{k=1}^m\alpha_k(z)\beta_k(z)^n,\ n\in\mathbb{N},
\]
be a function where $\alpha_k(z)$ and $\beta_k(z)$ are analytic in a domain $D$. We say that an index $k$ is dominant at $z$ if $|\beta_k(z)|\geq|\beta_l(z)|$ for all $1\leq l\leq m$. 
\end{definition}
\begin{definition}\label{lim}
Let 
\[
D_k:=\{z\in D|\ k\text{ is dominant at }z\}.
\]
We say that $\liminf\mathcal{Z}(f_n)$ is the set of all $z\in D$ such that every neighborhood $U$ of $z$ has a nonempty intersection with all but finitely many of the sets $\mathcal{Z}(f_n)$, and $\limsup\mathcal{Z}(f_n)$ is the set of all $z\in D$ such that every neighborhood $U$ of $z$ has a nonempty intersection with infinitely many of the sets $\mathcal{Z}(f_n)$. 
\end{definition}
In other words, Definition \ref{lim} says that $z\in\liminf\mathcal{Z}(f_n)$ if for any neighborhood $U$ of $z$, $f_n$ has a zero in $U$ for all $n$ large enough, and $z\in\limsup\mathcal{Z}(f_n)$ if for any neighborhood $U$ of $z$, infinitely many $f_n$ have a zero in $U$.
\begin{theorem}\label{sokal} (Sokal \cite{Sokal}) Let $D$ be a domain in $\mathbb{C}$ and let $\alpha_1,\dots,\alpha_m,\beta_1,\dots,\beta_m$ $(m\geq2)$ be analytic functions on $D$, none of which is identically zero. Let us further assume a ``no degenerate dominance" condition: there do not exist indices $k\neq k'$ such that $\beta_k\equiv\omega\beta_{k'}$ for some constant $\omega$ with $|\omega|=1$ and such that $D_k\ (=D_{k'})$ has nonempty interior. For each integer $n\geq0$, define $f_n$ by 
\begin{equation}\label{fn}
f_n(z)=\sum_{k=1}^m\alpha_k(z)\beta_k(z)^n.
\end{equation}
Then $\liminf\mathcal{Z}(f_n)=\limsup\mathcal{Z}(f_n)$, and a point $z^*$ lies in this set if and only if either 
\begin{enumerate}
\item there is a unique dominant index $k$ at $z^*$, and $\alpha_k(z^*)=0$, or 
\item there are two or more dominant indices at $z^*$.
\end{enumerate}
\end{theorem}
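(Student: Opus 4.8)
The plan is to localize the analysis around an arbitrary point $z^*\in D$ and to classify $f_n$ near $z^*$ according to which indices are dominant there, in the spirit of the Beraha--Kahane--Weiss theorem. Relabel the indices so that $\beta_1,\dots,\beta_r$ are exactly the dominant ones at $z^*$, i.e. $|\beta_1(z^*)|=\cdots=|\beta_r(z^*)|>|\beta_l(z^*)|$ for every $l>r$. The one analytic tool I would rely on throughout is Hurwitz's theorem: if $F_n\to F$ uniformly on a compact set with $F\not\equiv 0$, then the zeros of $F_n$ converge to those of $F$.

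First I would treat the case $r=1$, where a unique index $k$ is dominant at $z^*$. Choosing a closed disk $K$ centered at $z^*$ on which $k$ stays strictly dominant, so that $|\beta_l/\beta_k|\le q<1$ on $K$ for all $l\neq k$, I factor out the dominant power:
\[
\frac{f_n(z)}{\beta_k(z)^n}=\alpha_k(z)+\sum_{l\neq k}\alpha_l(z)\left(\frac{\beta_l(z)}{\beta_k(z)}\right)^n .
\]
The tail tends to $0$ uniformly on $K$, so $f_n/\beta_k^n\to\alpha_k$ uniformly. If $\alpha_k(z^*)\neq 0$, then after shrinking $K$ so that $\alpha_k$ is zero-free, Hurwitz's theorem gives no zeros of $f_n$ in $K$ for all large $n$, so $z^*\notin\limsup\mathcal{Z}(f_n)$; if instead $\alpha_k(z^*)=0$, then since $\alpha_k\not\equiv 0$ the zeros of $f_n$ must converge to $z^*$, so $z^*\in\liminf\mathcal{Z}(f_n)$. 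This already yields alternative (1) together with the non-accumulation half of the statement.

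The substantive case is $r\ge 2$, where I would prove $z^*\in\liminf\mathcal{Z}(f_n)$ by a winding argument on the equimodular locus $\{|\beta_1|=\cdots=|\beta_r|\}$. Factoring out $\beta_1^n$ reduces the problem, up to a geometrically small error, to the balanced sum $\sum_{j=1}^r\alpha_j(\beta_j/\beta_1)^n$; a zero is produced when the phases $n\arg(\beta_j/\beta_1)$ align so as to cancel the amplitudes $\alpha_j$. As $n\to\infty$ these phases sweep rapidly along the locus, and the no-degenerate-dominance hypothesis guarantees that no ratio $\beta_j/\beta_{j'}$ is a unimodular constant on a set of nonempty interior, so the governing phase is genuinely nonconstant and every required alignment is realized arbitrarily close to $z^*$. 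Making this rigorous by a Rouch\'e or argument-principle count on small contours about $z^*$ produces a zero of $f_n$ in every neighborhood of $z^*$ for all large $n$, which is alternative (2).

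Assembling the three cases gives the stated dichotomy, and since they partition $D$ into points that are eventually zero-free and points that attract zeros for all large $n$, the identity $\liminf\mathcal{Z}(f_n)=\limsup\mathcal{Z}(f_n)$ follows (using the trivial inclusion $\liminf\subseteq\limsup$). The main obstacle is the case $r\ge 2$: Hurwitz's theorem no longer applies because $f_n/\beta_1^n$ need not converge, so the crux is the quantitative winding estimate showing that the phase alignments occur for \emph{every} large $n$ and densely along the equimodular locus --- exactly the point at which the no-degenerate-dominance condition is indispensable.
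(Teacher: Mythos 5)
Your case $r=1$ is correct and complete: strict dominance of the index $k$ persists on a closed disk about $z^*$ (note $\beta_k(z^*)\neq 0$ automatically, since $|\beta_k(z^*)|>|\beta_l(z^*)|\ge 0$), the tail is geometrically small, $f_n/\beta_k^n\to\alpha_k$ uniformly, and Hurwitz gives both the exclusion half (no zeros near $z^*$ for large $n$ when $\alpha_k(z^*)\neq 0$) and alternative (1). Be aware, though, that the paper itself offers no proof of this statement --- it is quoted from Sokal \cite{Sokal} (a sharpening of the Beraha--Kahane--Weiss theorem) --- so the only meaningful benchmark is Sokal's argument, and measured against it your treatment of the case $r\ge 2$, which is the entire content of the theorem, has a genuine gap rather than a compressible sketch.

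Two concrete failures. First, membership in $\liminf\mathcal{Z}(f_n)$ requires a zero in each neighborhood of $z^*$ for \emph{every} sufficiently large $n$. For $r=2$, with $\alpha_1,\alpha_2$ zero-free near $z^*$ and $u=\beta_2/\beta_1$ nonconstant, your winding idea can indeed be closed by Rouch\'e, because $u$ is an open map and $u^n\bigl(B(z^*,\delta)\bigr)$ eventually covers a full annulus about the unit circle. But for $r\ge 3$ one cannot, for a \emph{fixed} large $n$, ``align'' the several phases $n\arg(\beta_j/\beta_1)$: simultaneous alignment is a Diophantine condition which equidistribution delivers only along infinitely many $n$, i.e.\ a $\limsup$ statement, not the $\liminf$ claimed. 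The actual BKW/Sokal proof does not align phases at all; it extracts subsequences along which the oscillating factors converge, applies Vitali/Hurwitz to the limit functions, and runs a separate argument to upgrade from subsequences to all large $n$. Second, you misread the no-degenerate-dominance hypothesis: it does \emph{not} assert that no ratio $\beta_j/\beta_{j'}$ is a unimodular constant; it forbids this only when the common dominance region $D_j=D_{j'}$ has nonempty interior. Degenerate pairs $\beta_k\equiv\omega\beta_{k'}$ with $|\omega|=1$ and $D_k$ of empty interior are permitted by the hypothesis, and at a point $z^*\in D_k$ alternative (2) must still be proved; there your ``governing phase'' is exactly constant, there is nothing to wind, and the pair contributes $(\alpha_k\omega^n+\alpha_{k'})\beta_{k'}^n$, whose coefficient oscillates with $n$ and can come arbitrarily close to cancelling identically along subsequences. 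Handling this --- via subsequences $\omega^{n_j}\to\eta$ together with the empty-interior hypothesis, which allows one to approach $z^*$ through nondegenerate equimodular points --- is precisely where the hypothesis enters and is Sokal's refinement over BKW; your proposal never engages it. A further unaddressed edge case: dividing by $\beta_1^n$ presumes $\beta_1(z^*)\neq 0$, yet in case (2) all dominant $\beta$'s may vanish simultaneously at $z^*$ (then every index is dominant), and the normalization must be modified before any of the above can even be set up.
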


We will also apply the Implicit Function Theorem, which we state as the following theorem.
\begin{theorem}(Implicit Function Theorem \cite{Implicit})\label{implicit}
Let $f_j(w,z),\ j=1,\dots,m$, be analytic functions of $(w,z)=(w_1,\dots,w_m,z_1,\dots,z_n)$ in a neighborhood of a point $(w^*,z^*)$ in $\mathbb{C}^m\times\mathbb{C}^n$, and assume that $f_j(w^*,z^*)=0,\ j=1,\dots,m$, and that
\[
\det\left(\frac{\partial f_j}{\partial w_k}\right)_{j,k=1}^m\neq0\ \text{ at }\ (w^*,z^*).
\]
Then the equations $f_j(w,z)=0,\ j=1,\dots,m$, have a uniquely determined analytic solution $w(z)$ in a neighborhood of $z^*$ such that $w(z^*)=w^*$.
\end{theorem}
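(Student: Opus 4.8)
The plan is to deduce this holomorphic statement from the classical real implicit function theorem and then upgrade the solution it produces from real-analytic to holomorphic. First I would split each complex variable into real and imaginary parts, writing $w_k=u_k+iv_k$ and $z_\ell=x_\ell+iy_\ell$, and replace each complex equation $f_j=0$ by the pair of real equations $\R f_j=0$ and $\I f_j=0$. This turns the system $f_j(w,z)=0$, $j=1,\dots,m$, into a system of $2m$ real equations in the $2m$ real unknowns $u_1,v_1,\dots,u_m,v_m$, depending real-analytically on the $2n$ real parameters carried by $z$ and vanishing at the base point corresponding to $(w^*,z^*)$.

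The first substantive step is to check that the real Jacobian of this enlarged system, taken with respect to the $(u_k,v_k)$, is invertible at the base point. Because every $f_j$ is holomorphic in $w$, the Cauchy--Riemann equations give the usual block form for the real derivative of the map $w\mapsto f(w,z)$, and a standard computation shows that the determinant of the real $2m\times 2m$ Jacobian equals $\left|\det\left(\partial f_j/\partial w_k\right)\right|^2$. By hypothesis the complex Jacobian determinant is nonzero at $(w^*,z^*)$, so its real counterpart is strictly positive there. The real-analytic implicit function theorem then furnishes a unique real-analytic map $w(z)$ on a neighborhood of $z^*$ with $w(z^*)=w^*$ and $f_j(w(z),z)\equiv 0$ for every $j$.

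The crux, and the only place where holomorphy rather than mere real-analyticity is used in an essential way, is to promote $w(z)$ to a holomorphic map. I would do this by differentiating the identities $f_j(w(z),z)\equiv 0$ with respect to $\bar{z}_\ell$. Since each $f_j$ is holomorphic, $\partial f_j/\partial\bar{w}_k=0$ and $\partial f_j/\partial\bar{z}_\ell=0$, so the chain rule collapses to $\sum_k\left(\partial f_j/\partial w_k\right)\left(\partial w_k/\partial\bar{z}_\ell\right)=0$ for each $j$ and $\ell$. The matrix $\left(\partial f_j/\partial w_k\right)$ remains invertible near the base point by continuity of the determinant, which forces $\partial w_k/\partial\bar{z}_\ell=0$ for all $k,\ell$; thus $w(z)$ satisfies the Cauchy--Riemann equations and is holomorphic. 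Uniqueness is inherited for free from the real theorem, since any holomorphic solution through $(w^*,z^*)$ is in particular a real solution and the real theorem already determines it.

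I do not expect a serious obstacle here, as this is a classical result; in the paper I would simply invoke \cite{Implicit}, and the reduction above merely records why the holomorphic version follows. The one point worth stating carefully is the Jacobian identity $\det_{\mathbb{R}}=\left|\det_{\mathbb{C}}\right|^2$, which is what transfers the nondegeneracy hypothesis from the complex setting to the real one; everything else is bookkeeping together with the computation of the antiholomorphic derivatives that certifies holomorphy.
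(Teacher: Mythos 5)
Your argument is correct and coincides with the standard proof of this result: the paper itself gives no proof, deferring entirely to the citation of H\"ormander, and the proof found there is exactly your reduction --- split into real and imaginary parts, transfer nondegeneracy via $\det_{\mathbb{R}}=\left|\det_{\mathbb{C}}\right|^2$, invoke the real implicit function theorem, and then certify holomorphy of $w(z)$ by applying $\partial/\partial\bar{z}_\ell$ to $f_j(w(z),z)\equiv 0$ and using invertibility of $\left(\partial f_j/\partial w_k\right)$ to force $\partial w_k/\partial\bar{z}_\ell=0$. There is no gap; your uniqueness remark and the Jacobian identity are both handled exactly as in the cited source.
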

We will later find a $z^*\in\mathbb{C}\setminus\mathbb{R}$ such that the zeros $t_1^*,\ t_2^*$, and $t_3^*$ of $1+z^*t+t^2+\alpha z^*t^3$ satisfy 
\[
|t_1^*|=|t_2^*|\leq|t_3^*|,
\]
where $t_1^*,\ t_2^*$, and $t_3^*$ are distinct and nonzero. Assuming that we can find such a $z^*$, we now claim that $z^*\in\liminf\mathcal{Z}(P_n)$, which implies that $P_n(z)$ is not hyperbolic for all large $n$.

First of all, Theorem \ref{implicit} implies that for a fixed $z^*\in\mathbb{C}\setminus\mathbb{R}$, there exist domains $D_1,\ D_2$, and $D_3$ containing $z^*$ and analytic functions $t_1(z),\ t_2(z)$, and $t_3(z)$ such that 
\begin{align*}
    D(t_1(z),z)&=0\ \forall z\in D_1, \\
    D(t_2(z),z)&=0\ \forall z\in D_2, \\
    D(t_3(z),z)&=0\ \forall z\in D_3,
\end{align*}
where $D(t,z)$ is the bivariate denominator of the generating function of $P_n(z)$. Let $\displaystyle D=\bigcap_{k=1}^3 D_k$. We supress the parameter $z$ and note that by continuity, $t_1\neq t_2\neq t_3\neq0\ \forall z\in D$. From partial fractions (see (2.4), (2.5) in \cite{Zumba}) we write $P_n(z)$ as
\[
-\frac{1}{(t_1-t_2)(t_1-t_3)t_1^{n+1}} 
-\frac{1}{(t_2-t_1)(t_2-t_3)t_2^{n+1}} 
-\frac{1}{(t_3-t_1)(t_3-t_2)t_3^{n+1}}.
\]
Thus $P_n(z)$ is of the form (\ref{fn}) where
\begin{align*}
\alpha_1(z)=-\frac{1}{(t_1-t_2)(t_1-t_3)t_1},\ \beta_1(z)&=\frac{1}{t_1}, \\ 
\alpha_2(z)=-\frac{1}{(t_2-t_1)(t_2-t_3)t_2},\ \beta_2(z)&=\frac{1}{t_1}, \\ 
\alpha_3(z)=-\frac{1}{(t_3-t_1)(t_3-t_2)t_3},\ \beta_3(z)&=\frac{1}{t_3}, 
\end{align*}
all of which are analytic on $D$ since $t_1\neq t_2\neq t_3\neq0\ \forall z\in D$. For the ``no degenerate dominance" condition of Theorem \ref{sokal}, for a fixed $\omega$ on the unit circle,  we will show that the set of $z\in D$ such that $\beta_1(z)=\omega\beta_2(z)$ has empty interior. Let $\omega\in\mathbb{C}$ where $|\omega|=1$ such that $\beta_1(z)\equiv\omega\beta_2(z)\ \forall z\in D$. Then $\omega=e^{2i\theta}$ for some fixed $\theta$ and $t_1=\omega t_2$, which means $t_1=\tau e^{-i\theta},\ t_2=\tau e^{i\theta}$, and $t_3=\tau\zeta$ for some $\tau,\zeta\in\mathbb{C}\setminus\{0\}$. By Vieta's formulas, we know 
\begin{align}
\label{1}t_1+t_2+t_3&=-\frac{1}{\alpha z}, \\
\label{2}t_1t_2+t_1t_3+t_2t_3&=\frac{1}{\alpha},\ \text{ and} \\
\label{3}t_1t_2t_3&=-\frac{1}{\alpha z}.
\end{align}
Substituting $t_1=\tau e^{-i\theta},\ t_2=\tau e^{i\theta}$, and $t_3=\tau\zeta$ into Equations (\ref{1}) and (\ref{3}), we obtain
\begin{equation}\label{z}
    z=-\frac{1}{\alpha\tau^3\zeta}
\end{equation}
and
\[
    \tau(2\cos\theta+\zeta)=\tau^3\zeta,
\]
which, since $\tau\neq0$, is equivalent to 
\begin{equation}\label{4}
    \tau^2=\frac{2\cos\theta+\zeta}{\zeta}.
\end{equation}
Substituting $t_1=\tau e^{-i\theta},\ t_2=\tau e^{i\theta}$, and $t_3=\tau\zeta$ into Equation (\ref{2}), we have 
\begin{equation}\label{6}
    \tau^2(1+2\zeta\cos\theta)=\frac{1}{\alpha}.
\end{equation}
Substituting the expression for $\tau^2$ from Equation (\ref{4}) into Equation (\ref{6}) gives
\[
\frac{2\cos\theta+\zeta}{\zeta}(1+2\zeta\cos\theta)=\frac{1}{\alpha},
\]
or equivalently,
\begin{equation}\label{7}
    2\alpha\cos\theta\zeta^2+(4\alpha\cos^2\theta+\alpha-1)\zeta+2\alpha\cos\theta=0.
\end{equation}
Since $\theta$ is fixed, the sets of $\tau$ and $\zeta$ satisfying Equations (\ref{6}) and (\ref{7}) are finite, and hence the set of $z$ satisfying Equation (\ref{z}) is also finite, and a finite set has empty interior.
If $z^*\in\mathbb{C}\setminus\mathbb{R}$ such that the zeros in $t$ of $1+z^*t+t^2+\alpha z^*t^3$ are distinct and nonzero on $D$, then by (2.4) and (2.5) in \cite{Zumba} and Theorem \ref{sokal}, $z^*\in\liminf\mathcal{Z}(P_n)$ when the two smallest (in modulus) zeros have the same modulus. This is because $|t_1(z^*)|=|t_2(z^*)|$, and the second condition of Theorem \ref{sokal} is satisfied at $z^*$ since 
\[
|\beta_1(z^*)|=|\beta_2(z^*)|\geq|\beta_3(z^*)|\ \text{ is equivalent to }\ |t_1(z^*)|=|t_2(z^*)|\leq|t_3(z^*)|.
\]
The following proposition proves that $P_n(z)$ is not hyperbolic for all large $n$ if $b<0$.
\begin{proposition}\label{b}
For the sequence $\{P_n(z)\}_{n=0}^\infty$ generated by 
\[
\sum_{n=0}^\infty P_n(z)t^n=\frac{1}{1+zt-t^2+czt^3},\ c\in\mathbb{R},
\]
the polynomials $P_n(z)$ are not hyperbolic for all large $n\in\mathbb{N}$.
\end{proposition}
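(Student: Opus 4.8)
The plan is to produce a single non-real point $z^{*}$ at which the two zeros of the denominator $D(t,z^{*})=cz^{*}t^{3}-t^{2}+z^{*}t+1$ of smallest modulus have equal absolute value, and then feed it into the apparatus already assembled before the proposition. Concretely, once we have $z^{*}\in\mathbb{C}\setminus\mathbb{R}$ whose three denominator-zeros $t_{1}^{*},t_{2}^{*},t_{3}^{*}$ are distinct and nonzero with $|t_{1}^{*}|=|t_{2}^{*}|\le|t_{3}^{*}|$, the partial-fraction expansion (see (2.4), (2.5) in \cite{Zumba}) writes $P_{n}$ in the form \eqref{fn} with $\beta_{k}=1/t_{k}$; the ``no degenerate dominance'' hypothesis holds by an argument analogous to the one preceding the proposition (the Vieta relations cut out only finitely many $z$, hence a set with empty interior); and the second alternative of Theorem \ref{sokal} applies, since $|t_{1}^{*}|=|t_{2}^{*}|\le|t_{3}^{*}|$ means there are two dominant indices at $z^{*}$. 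Hence $z^{*}\in\liminf\mathcal{Z}(P_{n})$, and choosing a neighborhood of the non-real $z^{*}$ disjoint from $\mathbb{R}$ forces $P_{n}$ to have a non-real zero for every large $n$, i.e.\ $P_{n}$ is not hyperbolic for all large $n$.

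To build $z^{*}$ I would search along the imaginary axis, setting $z=iy$ with $y\in\mathbb{R}$. The crucial structural fact is the symmetry $\overline{D(t,iy)}=D(-\overline{t},iy)$, so the zero set of $D(\cdot,iy)$ is invariant under $t\mapsto-\overline{t}$. Equivalently, the substitution $t=is$ converts $D(is,iy)=0$ into the real cubic $g(s):=cy\,s^{3}+s^{2}-ys+1=0$. Since $g$ has real coefficients, its non-real zeros come in conjugate pairs, and a conjugate pair $\{s,\overline{s}\}$ of $g$ corresponds to a pair $\{is,i\overline{s}\}=\{t,-\overline{t}\}$ of zeros of $D$ of equal modulus. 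Thus the equal-modulus requirement becomes automatic as soon as $g$ has a genuine complex-conjugate pair.

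Existence of a usable $y$ then comes from a limiting argument as $y\to0$. Since $g(s)\to s^{2}+1$, two zeros of $g$ converge to $\pm i$ while the third escapes to infinity as the leading coefficient $cy$ degenerates; consequently, for all sufficiently small $y\neq0$ the cubic $D(\cdot,iy)$ has two zeros $t_{1}^{*},t_{2}^{*}=-\overline{t_{1}^{*}}$ of equal modulus lying near $-1$ and $1$, together with a purely imaginary zero $t_{3}^{*}$ of large modulus. Fixing such a $y$ gives $|t_{1}^{*}|=|t_{2}^{*}|<|t_{3}^{*}|$; the three zeros are distinct and nonzero (note $g(0)=1\neq0$, so none of them is $0$); and $z^{*}=iy\notin\mathbb{R}$. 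This is exactly the configuration required in the first paragraph, with the equal pair furnishing the two dominant indices, and the same $z^{*}$ works uniformly for every nonzero $c$ (when $c=-1$ the two near-unit zeros are exactly $\pm1$, which is harmless; the degenerate case $c=0$ reduces to the two-term denominator and is handled the same way with $m=2$).

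The step I expect to be the main obstacle is the precise zero-tracking near $y=0$: one must rigorously justify that exactly two zeros of $D(\cdot,iy)$ stay bounded (tending to $\pm1$) while one diverges, that the bounded pair is genuinely the pair of smallest modulus, and that all three zeros remain distinct and nonzero for the chosen $y$. All of this rests on continuity of zeros under perturbation together with the degeneration of the cubic denominator to a quadratic at $z=0$, and can be made precise via Theorem \ref{implicit}. A secondary, purely computational obstacle is re-deriving the ``no degenerate dominance'' bound for the present denominator $1+zt-t^{2}+czt^{3}$: the Vieta computation that led to the quadratic \eqref{7} must be repeated with the sign of the $t^{2}$-term reversed, though the outcome—only finitely many exceptional $z$, hence empty interior—is identical.
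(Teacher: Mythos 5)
Your proposal is correct and follows essentially the same route as the paper: choose $z^*=iy$ on the imaginary axis, exploit the symmetry $t\mapsto-\overline{t}$ of the zero set of the denominator to force two equal-modulus roots near $\pm1$ with the third root of larger modulus, and feed this $z^*$ into the Sokal/partial-fractions apparatus assembled before the proposition. The one technical obstacle you flag---rigorously tracking the root of $D(\cdot,iy)$ that diverges as $y\to0$---is exactly what the paper's proof sidesteps by passing to the reciprocal polynomial $D^*(t)=t^3+zt^2-t+cz$, whose roots at $z=0$ are $0$ and $\pm1$, so ordinary continuity of roots yields $|t_1^*|=|t_2^*|\geq|t_3^*|$ and, after taking reciprocals, the desired $|t_1|=|t_2|\leq|t_3|$ (the paper also disposes of the degenerate case $c=0$ by citing the three-term recurrence result of \cite{Tran}, which places the zeros on the imaginary interval $(-2i,2i)$, rather than re-running Sokal's theorem with $m=2$ as you suggest).
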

\begin{proof}
Case 1: $c=0$. Then the generating relation reduces to 
\[
\sum_{n=0}^\infty P_n(z)t^n=\frac{1}{1+zt-t^2}.
\]
By \cite{Tran} the zeros of this sequence lie on the curve defined by 
\[
\I\frac{z^2}{-1}=0\ \text{ and }\ 0\leq \R\frac{z^2}{-1}\leq4.
\]
In particular, the zeros lie on the curve defined by $z^2\in\mathbb{R}$ and $-4\leq z^2\leq0$, which implies that the zeros lie on the imaginary interval $(-2i,2i)$. \\
Case 2: $c\neq0$. 
Define 
\[
D(t):=1+zt-t^2+czt^3 
\]
with zeros $t_1,\ t_2$, and $t_3$, and consider the reciprocal polynomial
\[
D^*(t):=t^3+zt^2-t+cz.
\]
Let the zeros of $D^*(t)$ be $t_1^*,\ t_2^*$, and $t_3^*$. We will show that there exists a $z^*\in\mathbb{C}\setminus\mathbb{R}$ such that $|t_1^*|=|t_2^*|\geq|t_3^*|$. \\
Let $\epsilon\in\mathbb{R}\setminus\{0\}$ and let $z^*=i\epsilon$. Note that if $z=0$ then the zeros of $D^*(t)$ are $0,1$, and $-1$. By the continuity of $D^*(t)$, it is clear that as $\epsilon\to0$,
$t_1^*\to1,\ 
t_2^*\to-1,$ and 
$t_3^*\to0$.

Note that for our choice of $z^*$, $z^*=-\overline{z^*}$. Since $t_1^*$ is a zero of $D^*(t)$, 
\[
t_1^{*3}+zt_1^{*2}-t_1^*+cz=0.
\]
Observe
\begin{align*}
\left(-\overline{t_1^*}\right)^3+z^*\left(-\overline{t_1^*}\right)^2-(-\overline{t_1^*})+cz^*&=
-\overline{t_1^*}^3-\overline{z^*}\overline{t_1^*}^2+\overline{t_1^*}-c \overline{z^*} \\
&=\overline{-t_1^{*3}-z^*t_1^{*2}+t-cz^*} \\
&=0.
\end{align*}
Hence $-\overline{t_1^*}:=t_2^*$ is also a zero of $D^*(t)$, and thus 
\[
|t_1^*|=|t_2^*|\geq|t_3^*|
\]
for sufficiently small $\epsilon$. \\
Since the zeros of $D^*(t)$ are the reciprocals of the zeros of $D(t)$, we have $|t_1|=|t_2|\leq|t_3|$.
\end{proof}
\begin{proposition}
The zeros of the sequence $\{P_n(z)\}_{n=0}^\infty$ generated by 
\[
\sum_{n=0}^\infty P_n(z)t^n=\frac{1}{1+zt+t^2+\alpha zt^3},\ \alpha\in\mathbb{R},
\]
are not all real if $\alpha>1/9$ for all large $n$.
\end{proposition}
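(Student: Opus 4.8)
The plan is to reuse the reduction set up above: it suffices to exhibit a single $z^{*}\in\mathbb{C}\setminus\mathbb{R}$ for which the three zeros $t_1^{*},t_2^{*},t_3^{*}$ of $1+z^{*}t+t^{2}+\alpha z^{*}t^{3}$ are distinct and nonzero and the two of smallest modulus share that modulus, i.e. $|t_1^{*}|=|t_2^{*}|\le|t_3^{*}|$. Once such a $z^{*}$ is in hand, the no-degenerate-dominance verification already carried out, together with Theorem \ref{sokal}, yields $z^{*}\in\liminf\mathcal{Z}(P_n)$. Since $z^{*}\notin\mathbb{R}$, a small enough neighborhood $U$ of $z^{*}$ avoids the real axis, and $U\cap\mathcal{Z}(P_n)\neq\varnothing$ for all large $n$; hence $P_n$ has a non-real zero for all large $n$ and is not hyperbolic. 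So the entire task is to produce one such $z^{*}$.

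To manufacture it I would return to the parametrization of Definition \ref{functions}. The key observation is that the two roots $\zeta_{\pm}$ of $f(\zeta)$ in \eqref{f} satisfy $\zeta_{+}\zeta_{-}=1$, since by Vieta's formula their product equals the ratio $2\alpha\cos\theta/(2\alpha\cos\theta)$ of the constant to leading coefficient of $f$. Consequently, for any $\theta$ with $\Delta(\theta)<0$ the roots $\zeta_{\pm}$ are complex conjugates, so $|\zeta|^{2}=\zeta_{+}\overline{\zeta_{+}}=\zeta_{+}\zeta_{-}=1$. Taking $t_1=\tau e^{-i\theta}$, $t_2=\tau e^{i\theta}$, $t_3=\tau\zeta$ as before, all three zeros then have the common modulus $|\tau|$, so the ``two or more dominant indices'' condition of Theorem \ref{sokal} holds automatically at $z(\theta)$. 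It therefore remains only to arrange that $\Delta(\theta)<0$, that $z(\theta)=-1/(\alpha\tau^{3}\zeta)$ is non-real, and that $t_1,t_2,t_3$ are distinct and nonzero.

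Next I would show $\Delta(\theta)<0$ somewhere in $(0,\pi)$ whenever $\alpha>1/9$. Viewing $\Delta$ as a quadratic in $c=\cos^{2}\theta$, its interior critical point $c=\frac{\alpha+1}{4\alpha}$ lies in $(0,1)$ precisely when $\alpha>1/3$, where $\Delta=-4\alpha<0$ by the computation in Lemma \ref{delta}; and for $1/9<\alpha\le 1/3$ the function $\Delta$ is monotone in $c$ with boundary value $\Delta(0)=(9\alpha-1)(\alpha-1)<0$, so $\Delta(\theta)<0$ for $\theta$ near $0$. In either case $\Delta<0$ on a nonempty open subinterval $I\subseteq(0,\pi)$, on which $\zeta,\ \tau^{2},\ \tau$, and $z$ are real-analytic.

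The hard part will be verifying that $z(\theta)$ is genuinely non-real for some $\theta\in I$. For $\alpha>1/3$ I would evaluate at the interior critical point, where $4\alpha\cos^{2}\theta=\alpha+1$ forces $\tau^{2}=\frac{(1+\alpha-4\alpha\cos^{2}\theta)-i\sqrt{|\Delta|}}{2\alpha}=-i/\sqrt{\alpha}$ to be purely imaginary; substituting the corresponding $\zeta=\frac{1}{2\cos\theta}\bigl(-1+\tfrac{i}{\sqrt\alpha}\bigr)$ into $z=-1/(\alpha\tau^{3}\zeta)$ gives a closed form with argument $\frac{3\pi}{4}+\arctan\frac{1}{\sqrt{\alpha}}$, which is an integer multiple of $\pi$ only when $\alpha=1$. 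This disposes of every $\alpha>1/3$ with $\alpha\neq1$. For the remaining values ($\alpha=1$, and $1/9<\alpha\le 1/3$) I would invoke real-analyticity: were $\I z$ to vanish on all of $I$ it would vanish identically there, so it suffices to produce one $\theta\in I$ with $z(\theta)\notin\mathbb{R}$, which I would get by a direct evaluation (as $\theta\to 0^{+}$ in the low-$\alpha$ range, where $\tau^{2}=\frac{(1-3\alpha)-i\sqrt{(9\alpha-1)(1-\alpha)}}{2\alpha}$, and at a point of $I$ near but distinct from $\pi/4$ when $\alpha=1$). Finally, $\tau\neq 0$ on $I$ (else $\zeta=-2\cos\theta\in\mathbb{R}$, contradicting $\Delta<0$), so the zeros are nonzero, and they are pairwise distinct except where $\zeta=e^{\pm i\theta}$, which happens only at finitely many $\theta$; shrinking $I$ to avoid these, the chosen $z^{*}=z(\theta)$ meets all requirements, and Theorem \ref{sokal} finishes the proof. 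I expect this last paragraph --- pinning down non-reality uniformly past the degenerate cases $\alpha=1$ and small $\alpha$ --- to be the main obstacle.
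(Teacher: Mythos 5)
Your proposal is correct in substance but takes a genuinely different route from the paper's. The paper splits into $\alpha>1$ and $1/9<\alpha\le 1$: for $\alpha>1$ it picks the explicit point $z=i$, uses the symmetry $z=-\bar z$ to show $-\bar t_1$ is also a root, then passes to the real cubic $P(y)=1+y-y^2-\alpha y^3$ via $it\to y$, whose discriminant $5+22\alpha-27\alpha^2$ is negative, and finishes with Vieta's formulas and the Intermediate Value Theorem to show the real root dominates; for $1/9<\alpha\le1$ it takes $\theta$ near $0$ with $\Delta(\theta)<0$ (as in your low-$\alpha$ range) and proves $z^*\notin\mathbb{R}$ by a contradiction argument about conjugate pairs, then cites Lemma \ref{one} for $|t_3^*|\ge|t_1^*|$. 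Your route is uniform in the parametrization throughout, and its structural core --- $\zeta_+\zeta_-=1$ by Vieta applied to (\ref{f}), hence $|\zeta|=1$ whenever $\Delta<0$, so all three roots share the modulus $|\tau|$ and condition (2) of Theorem \ref{sokal} holds automatically --- is both correct and an improvement: it quietly repairs two soft spots in the paper, namely the appeal to Lemma \ref{one} in a regime where its proof (which needs $\Delta>0$ via $f(-1)f(1)=-\Delta$) does not apply (when $\Delta<0$ one has exactly $|\zeta|=1$, not $|\zeta|>1$, though equality still suffices for dominance), and the boundary value in Case 2, where the paper's $9\alpha^2-11\alpha+1$ should read $9\alpha^2-10\alpha+1=(9\alpha-1)(\alpha-1)$, which vanishes at $\alpha=1$; your vertex case $\alpha>1/3$ covers a neighborhood of $\alpha=1$ where that computation is shaky, and you treat $\alpha=1$ separately. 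I checked your sketched evaluations and they go through: at the vertex $4\alpha\cos^2\theta=\alpha+1$ one indeed gets $\tau^2=-i/\sqrt{\alpha}$ and $\arg z=\tfrac{3\pi}{4}+\arctan(1/\sqrt{\alpha})$, a multiple of $\pi$ only at $\alpha=1$; and for the $\theta\to0^+$ evaluation, writing $\zeta(0)=e^{i\psi}$ with $\psi\in(0,\pi)$, one has $\tau^2=1+2e^{-i\psi}$, and $F(\psi):=\tfrac{3}{2}\arg\bigl(1+2e^{-i\psi}\bigr)+\psi$ satisfies $F'(\psi)=(\cos\psi-1)/(5+4\cos\psi)<0$, so $F$ decreases strictly from $0$ to $-\pi/2$ and $\arg(\tau^3\zeta)$ is never a multiple of $\pi$; hence $z(0^+)\notin\mathbb{R}$ for every $\alpha\in(1/9,1)$, not just $\alpha\le1/3$. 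The one sentence I would rewrite is your plan for $\alpha=1$ ``near but distinct from $\pi/4$'': since $z(\pi/4)\in\mathbb{R}$ there, non-reality nearby is exactly what needs proving, and real-analyticity alone cannot supply it without a witness point; but none is needed, because at $\alpha=1$ one has $\Delta(\theta)=-4\sin^2 2\theta<0$ on all of $(0,\pi)\setminus\{\pi/2\}$, and $z(\theta)\to\pm i$ as $\theta\to0^+$, so continuity gives non-real $z(\theta)$ for small $\theta$ just as in the other range. With that substitution, and your (correct) observations that $\tau\neq0$ when $\Delta<0$ and that $\zeta=e^{\pm i\theta}$ forces $8\alpha\cos^2\theta+\alpha-1=0$, hence occurs at only finitely many $\theta$, the argument is complete.
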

\begin{proof}
Case 1: Suppose $\alpha>1$. Consider $z=i$. Then the denominator of the generating relation becomes 
\[
D(t):=1+it+t^2+\alpha it^3.
\]
We now show that if $t_1,\ t_2$, and $t_3$ are the zeros of $D(t)$, then $|t_1|=|t_2|\leq|t_3|$.
Note that $z=i\implies z=-\overline{z}$. 
Suppose $t_1$ is a zero of $D(t)$, so that
\[
1+it_1+t_1^2+\alpha it_1^3=0.
\]
Then
\begin{align*}
1+i(-\overline{t_1})+(-\overline{t_1})^2+\alpha i(-\overline{t_1})^3&=1-i\overline{t_1}+\overline{t_1}^2-\alpha i\overline{t_1}^3 \\
&=1+\overline{i}\overline{t_1}+\overline{t_1}^2+\alpha\overline{i}\overline{t_1}^3 \\
&=\overline{1+it_1+t_1^2+\alpha it_1^3} \\
&=0.
\end{align*}
Hence $-\overline{t_1}:=t_2$ is also a zero of $D(t)$. So we have $|t_1|=|t_2|$. It remains to show that $|t_3|\geq|t_1|$. \\
Let us make the substitution $it\to y$ so that we may work with a cubic polynomial with real coefficients. Then our function $D(t)$ becomes 
\[
P(y):=1+y-y^2-\alpha y^3.
\]
Note that the discriminant of a cubic polynomial $ax^3+bx^2+cx+d$ is 
\[
\Delta=18abcd-4b^3d+b^2c^2-4ac^3-27a^2d^2.
\]
The discriminant of $P(y)$ is then 
\[
\Delta=5+22\alpha-27\alpha^2.
\]
Since $\alpha>1$, $\Delta<0$, which implies that $P(y)$ has one real zero and two non-real complex conjugate zeros. \\
Let $y_1,\ y_2$, and $y_3$ be the zeros of $P(y)$ with 
\[
y_1=\tau e^{i\theta},\ y_2=\tau e^{-i\theta},\ \text{and}\ y_3\ \text{is real}.
\]
We now show that $|y_3|>\tau$. \\
By Vieta's formulas, we have
\begin{align*}
y_1+y_2+y_3&=-\frac{1}{\alpha}, \\
y_1y_2+y_1y_3+y_2y_3&=-\frac{1}{\alpha}, \\
y_1y_2y_3&=\frac{1}{\alpha}.
\end{align*}
The last equality gives 
\[
\tau^2y_3=\frac{1}{\alpha}\implies\tau^2=\frac{1}{\alpha y_3}.
\]
Note that $|y_1|=|y_2|=\tau$, so we want to show that $\tau^2<y_3^2$, or equivalently, $y_3>\frac{1}{\sqrt[3]{\alpha}}$.
Observe
\[
P\left(\frac{1}{\sqrt[3]{\alpha}}\right)=1+\frac{1}{\sqrt[3]{\alpha}}-\frac{1}{\sqrt[3]{\alpha^2}}-\alpha\left(\frac{1}{\alpha}\right)=\frac{1}{\sqrt[3]{\alpha}}-\frac{1}{\sqrt[3]{\alpha^2}},
\]
which is greater than $0$ since $\alpha>1$. 
Furthermore,
\begin{align*}
\lim_{y\to-\infty}P(y)&=+\infty \\
\text{and }\ \lim_{y\to\infty}P(y)&=-\infty.
\end{align*}
Since $P(y_3)=0$ and $P\left(\frac{1}{\sqrt[3]{\alpha}}\right)>0$, and $y_3$ is the only real zero of $P(y)$, we get that $y_3>\frac{1}{\sqrt[3]{\alpha}}$ by the Intermediate Value Theorem. \\
Case 2: Suppose $1/9<\alpha\leq1$. Since
\[
\lim_{\theta\to0}\Delta(\theta)=9\alpha^2-11\alpha+1<0,
\]
by the continuity of $\Delta(\theta)$ there is $\theta^*$ sufficiently close to $0$ so that $\Delta(\theta^*)<0$.
For this choice of $\theta^*$, we have $\zeta^*:=\zeta(\theta^*)\notin\mathbb{R}$ and hence $\tau^*:=\tau(\theta^*)\notin\mathbb{R}$.
It is clear that $\zeta^*$ and $\tau^*$ are not real, but we must verify that $z^*:=z(\theta^*)$ is not real. \\
Suppose by way of contradiction that $z^*\in\mathbb{R}$. 
Then 
\[
g(t):=1+z^*t+t^2+\alpha z^*t^3
\]
is a polynomial in $t$ with real coefficients. \\
By Vieta's formulas,
\begin{align*}
t_1^*&:=\tau^*e^{-i\theta}, \\
t_2^*&:=\tau^*e^{i\theta}, \\
t_3^*&:=\tau^*\zeta^*
\end{align*}
are the zeros of $g(t)$. Since $g(t)$ is a polynomial with real coefficients, we know complex zeros occur in conjugate pairs. So 
\[
|t_1^*|=|t_2^*|\implies\tau^*\in\mathbb{R},
\]
a contradiction. Hence $z^*\notin\mathbb{R}$. Since $|t_3^*|>1$ by Lemma \ref{one}, we have
\[
|t_1^*|=|t_2^*|\leq|t_3^*|,
\]
and hence we have by Theorem \ref{sokal} that $P_n(z)$ is not hyperbolic for large $n$.
\end{proof}
Thus, we have shown that if $b<0$ or $\alpha>1/9$, $P_n(z)$ is not hyperbolic for all large $n$.

%%%%%%%%%%%%%%%%%%%%%%%%%%%%%%%%%%%%%%%%%%%%%%%%%%%%%%%%%%%%%%%%%%%%%%%%%%%%%%%%%%%%%%%%%%%%%%%%%%%%%%

\section{OPEN PROBLEM}
One may wish to characterize polynomials $A(z),\ B(z),\ C(z)$ so that the sequence of polynomials $\{P_n(z)\}_{n=0}^\infty$ defined by 
\[
\sum_{n=0}^\infty P_n(z)t^n=\frac{1}{1+A(z)t+B(z)t^2+C(z)t^3}
\]
is hyperbolic. In this paper, the main result, Theorem \ref{main2}, characterizes one of the cases where $A(z)$, $B(z)$, and $C(z)$ are linear.

\clearpage

%%%%%%%%%%%%%%%%%%%%%%%%%%%%%%%%%%%%%%%%%%%%%%%%%%%%%%%%%%%%%%%%


\begin{thebibliography}{99}
\addcontentsline{toc}{section}{REFERENCES} %THIS ADDS REFERENCES TO TABLE OF CONTENTS
\singlespacing %THIS LOOKS BETTER ON REFERENCE PAGE, DOUBLESPACING WOULD BE TOO UGLY
\thispagestyle{empty} %NO PAGE NUMBER ON SECTION PAGES.

\bibitem{Orthogonal} Abramowitz, Milton; Stegun, Irene A.
Handbook of mathematical functions with formulas, graphs, and mathematical tables. 
National Bureau of Standards Applied Mathematics Series, 55 For sale by the Superintendent of Documents, U.S. Government Printing Office, Washington, D.C. (1964).

\bibitem{Ch} M. Charalambides, G. Csordas, The distribution of zeros of a class of Jacobi polynomials, Proc.
Amer. Math. Soc. 138 (2010), no. 12, 4345-4357.

\bibitem{Boor} Conte, S. D.
Elementary numerical analysis: An algorithmic approach. McGraw-Hill Book Co., New York-Toronto, Ont.-London (1965).

\bibitem{Conway} Conway, John B. Functions of one complex variable. Second edition. Graduate Texts in Mathematics, 11. Springer-Verlag, New York-Berlin, (1978).

\bibitem{Eg} E\u gecio\u glu, \"Omer; Redmond, Timothy; Ryavec, Charles. From a polynomial Riemann hypothesis to alternating sign matrices. The Electronic Journal of Combinatorics (2001).

\bibitem{Forgacs} Forg\'acs, Tam\'as; Tran, Khang. Zeros of polynomials generated by a rational function with a hyperbolic-type denominator. Constr. Approx. 46 (2017), no. 3, 617-643.

\bibitem{Goh} Goh, W., He, M.X. \& Ricci, P.E. On the universal zero attractor of the Tribonacci-related polynomials. Calcolo (2009) 46: 95

\bibitem{He} M. X. He, E. B. Saff, The zeros of Faber polynomials for an m-cusped hypocycloid, J. Approx.
Theory 78 (1994), no. 3, 410-432.

\bibitem{Implicit} H\"ormander, Lars. An introduction to complex analysis in several variables. Third edition. North-Holland Mathematical Library, 7. North-Holland Publishing Co., Amsterdam, (1990). 

\bibitem{Orthogonal1} Milovanović, Gradimir V.(YU-NISEE)
Orthogonal polynomial systems and some applications. Inner product spaces and applications, 115-182, 
Pitman Res. Notes Math. Ser., 376, Longman, Harlow, (1997).

\bibitem{Munkres} Munkres, James R. Topology. Second edition of [MR0464128]. Prentice Hall, Inc., Upper Saddle River, NJ, (2000).

\bibitem{Si} Silva, A. P. da; Sri Ranga, A. Polynomials generated by a three term recurrence relation: bounds for complex zeros. Linear Algebra Appl. 397 (2005), 299-324.

\bibitem{Sokal} Sokal, Alan D.(1-NY-P)
Chromatic roots are dense in the whole complex plane. (English summary) 
Combin. Probab. Comput. 13 (2004), no. 2, 221-261.

\bibitem{Tran1} Tran, Khang. Connections between discriminants and the root distribution of polynomials with rational generating function. J. Math. Anal. Appl. 410 (2014), no. 1, 330-340. 

\bibitem{Tran} Tran, Khang. The root distribution of polynomials with a three-term recurrence. J. Math. Anal. Appl. 421 (2015), no. 1, 878-892.

\bibitem{Zumba} Tran, Khang; Zumba, Andres. Zeros of polynomials with four-term recurrence. Involve 11 (2018), no. 3, 501-518.


\end{thebibliography}
\end{document}